\setlist{nolistsep}
\DeclareMathAlphabet{\mathcal}{OMS}{cmsy}{m}{n}
\newcommand{\R}{\mathbb{R}}
\newcommand{\IP}{\mathbb{P}}
\newcommand{\N}{\mathbb{N}}
\newcommand{\E}{\mathbb{E}}
\newcommand{\IL}{\mathbb{L}}
\newcommand{\cB}{\mathcal{B}}
\newcommand{\F}{\mathcal{F}}
\newcommand{\cN}{\mathcal{N}}
\newcommand{\cL}{\mathcal{L}}
\newcommand{\1}{\mathds{1}}
\newcommand{\eps}{\varepsilon}
\def\build#1_#2^#3{\mathrel{\mathop{\kern 0pt#1}\limits_{#2}^{#3}}}
\numberwithin{equation}{section}
\titleformat{\chapter}{\fontfamily{phv}\selectfont\LARGE\bfseries}{\thesection}{1em}{\fontfamily{phv}\selectfont\LARGE\bfseries #1}
\titleformat{\section}{\centering\fontfamily{phv}\selectfont\bfseries}{\thesection}{1em}{\centering\fontfamily{phv}\selectfont\bfseries #1}
\titleformat{\subsection}{\fontfamily{phv}\selectfont\small\bfseries\centering}{\thesubsection}{1em}{\fontfamily{phv}\selectfont\small\bfseries #1}
\titleformat{\subsubsection}{\fontfamily{phv}\selectfont\footnotesize\bfseries\centering}{\thesubsubsection}{1em}{\fontfamily{phv}\footnotesize\selectfont\bfseries #1}
\renewcommand*\thesection{\arabic{section}}
\renewcommand{\@seccntformat}[1]{\llap{{\csname the#1\endcsname}\hspace{1em}}}   
\newtheoremstyle{thmit}{10pt}{10pt}
{\normalfont\itshape}
{}
{\small\bf\fontfamily{phv}\selectfont}
{.\;}
{0.25em}
{\small\fontfamily{phv}\selectfont\thmname{#1}\nobreakspace\footnotesize\thmnumber{#2}
\thmnote{\nobreakspace\the\thm@notefont\fontfamily{phv}\selectfont\footnotesize\bfseries\--\nobreakspace#3}}
\newtheoremstyle{rmq}
{5pt}
{5pt}
{\normalfont}
{}
{\small\bf\fontfamily{phv}\selectfont}
{\;}
{0.25em}
{\small\fontfamily{phv}\selectfont\thmname{#1}\nobreakspace\footnotesize\thmnumber{\@ifnotempty{#1}{}\@upn{#2}}
\thmnote{\nobreakspace\the\thm@notefont\fontfamily{phv}\selectfont\footnotesize\bfseries--\nobreakspace#3.}}
\newcounter{count}
\numberwithin{count}{section}
\newcounter{alpha}
\theoremstyle{thmit}
\newtheorem{theorem}[count]{Theorem\small}
\newtheorem{proposition}[alpha]{Proposition\small}
\theoremstyle{rmq}
\newtheorem{remark}[count]{Remark\small}
\renewenvironment{proof}[2]
{\paragraph{\fontfamily{phv}\selectfont\bfseries\small Proof of #1 \footnotesize #2.}}%
{\begin{flushright}
\qed
\end{flushright}}
\begin{document}
\pagestyle{fancy}

\title{\vspace{-3ex}\fontfamily{phv}\selectfont\bfseries\Large%
A martingale approach for Pólya urn processes}
\author{\fontfamily{phv}\selectfont\bfseries
Lucile Laulin}
\date{}
\AtEndDocument{\bigskip{\footnotesize%
  \textsc{Université de Bordeaux, Institut de Mathématiques de Bordeaux,
 UMR 5251, 351 Cours de la Libération, 33405 Talence cedex, France.} \par  
  \textit{E-mail adress :} \href{mailto:lucile.laulin@math.u-bordeaux.fr}{\texttt{lucile.laulin@math.u-bordeaux.fr}} \par
	}}

\maketitle

\centerline{
\begin{minipage}[c]{0.7\textwidth}
{\small\section*{Abstract}\vspace{-1ex}
This paper is devoted to a direct martingale approach for Pólya urn models asymptotic behaviour. A Pólya process is said to be small when the ratio of its remplacement matrix eigenvalues is less than or equal to $1/2$, otherwise it is called large. 
We find again some well-known results on the asymptotic behaviour for small and large urns processes. We also provide new almost sure properties for small urns processes.
}\end{minipage}
}

\setlength{\parindent}{0pt}

\section{Introduction}
At the inital time $n=0$, an urn is filled with $\alpha \geq 0$ red balls and $\beta \geq 0$ white balls. 
Then, at any time $n\geq 1$  one ball is drawn randomly from the urn and its color observed. If it is red it is then returned to the urn together with $a$ additional red balls and $b\geq 0$ white ones. If it is white it is then returned to the urn together with $c\geq 0$ additional red balls and $d$ white ones. The model corresponding replacement matrix is given, for $a,b,c,d\in\N$, by
\begin{equation}
	R= \begin{pmatrix}a & b \\ c & d\end{pmatrix}.
\end{equation}
The urn processe is said to be {\it balanced} if the total number of balls added at each step is a constant, $S =a+b=c+d \geq 1$.
Thanks to the balance assumption, $S$ is the maximum eigenvalue of $R^T$. Moreover, the second eigenvalue of $R^T$ is given by $m=a-c=d-b$. Throughout the rest of this paper, we shall denote 
\begin{equation*}
	\sigma = m/S\leq 1
\end{equation*}
the ratio of the two eigenvalues. It is straightforward that the respective eigenvectors of $R^T$ are given by
\begin{equation*}
	v_1 = \frac{S}{b+c} \begin{pmatrix} c \\ b\end{pmatrix} \hspace{1cm}\text{and}\hspace{1cm} v_2 = \frac{S}{b+c} \begin{pmatrix} 1 \\ -1\end{pmatrix}.
\end{equation*}
We can rewrite $R^T$ under the following form
	\begin{equation*}
		R^T = PDP^{-1} = 	\frac{1}{b+c} \begin{pmatrix} c & 1 \\ b & -1 \end{pmatrix}
	\begin{pmatrix} S & 0 \\ 0 & m \end{pmatrix}\begin{pmatrix} 1 & 1 \\ b & -c \end{pmatrix}.
	\end{equation*}
Hereafter, let us define the process $(U_n)$, the composition of the urn at time $n$, by
\begin{equation*}
	U_n=\begin{pmatrix}X_n \\ Y_n\end{pmatrix} \hspace{1cm}\text{and}\hspace{1cm} U_0=\begin{pmatrix}\alpha \\ \beta\end{pmatrix}
\end{equation*} where $X_n$ is the number of red balls and $Y_n$ is the number of white ones. Then, let $\tau=\alpha+\beta \geq 1$ and $\tau_n=\tau+nS$ be the number of ball inside the urn at time $n$. In particular, one can observe that $X_n + Y_n = \tau_n$ is a deterministic quantity.

The traditionnal Pólya urn model corresponds to the case where the replacement matrix $R$ is diagonal, while the generalized Pólya urn model corresponds to the case where the replacement matrix $R$ is at least triangular.

The questions about the asymptotic behavior of $(U_n)$ have been extensively studied, firstly by Freedman \cite{freedman65} and by many after, see for example \cite{Chauvin2011,Flajolet06,Flajolet05,Janson04,Pouyanne08,janson18}. We also refer the reader to Pouyanne's CIMPA summer school lectures 2014 \cite{cimpa14} for a very comprehensive survey on Pólya urn processes that has been a great source of inspiration. The reader may notice that this paper is related to Bercu \cite{Bercu18} on the elephant random walk. This is due to the paper of Baur and Bertoin \cite{Baur16} on connection between elephant random walks and Pólya-type urns.

Our strategy is to use the martingale theory \cite{Duflo97,Hall80} in order to propose a direct proof of the asymptotic normality associated with $(U_n)$. We also establish new refinements on the alm{}ost sure convergence of $(U_n)$.  The paper is organized as follows. In Section 2, we briefly present the traditional Pólya urn model, as well as the martingale related to this case. We establish the almost sure convergence and the asymptotic normality for this martingale. In Section 3, we present the generalized Pólya urn model with again the martingale related to this case, and we also give the main results for this model.  Hence, we first investigate small urn regime where $\sigma \leq 1/2$ and we establish the almost sure convergence, the law of iterated logarithm and the quadratic strong law for $(U_n)$. The asymptotic normality of the urn composition is also provided. We finally study the large urn where $\sigma >1/2$ and we prove the almost sure convergence as well as the mean square convergence of $(U_n)$ to a non-degenerate random vector whose moments are given. The proofs are postponed to Sections 4 and 5.

\section{Traditional Pólya urn model}

This model corresponds to the case where the replacement matrix is diagonal
\begin{equation*}
	R= \begin{pmatrix} S & 0 \\ 0 & S \end{pmatrix}.
\end{equation*}
It means that at any time $n\geq 1$,  one ball is drawn randomly from the urn, its color observed and it is then returned to the urn together with $S \geq 1$ additional balls of the same color.
Let us define the process $(M_n)$ by 
\begin{equation*}
	M_n= \frac{X_n}{\tau_n}
\end{equation*} 
and write 
\begin{equation}
 	X_n=\alpha+S\sum_{k=1}^n \eps_k
 \end{equation}
 where the conditional distribution of $\eps_{n+1}$ given the past up to time $n$ is $\cL(\eps_{n+1} |\F_n)=\cB(M_n)$. 
We clearly have 
	\begin{equation*}
		\E[M_{n+1}|\F_n] = M_n
	\end{equation*}
which means that $(M_n)$ is a martingale.
We have $\Delta M_{n+1}=\frac{S}{\tau_{n+1}}\big(\eps_{n+1}-M_n\big)$. Hence,
\begin{equation*}
	\E\bigl[\Delta M_{n+1}^2 | \F_n \bigr] = \frac{S^2}{\tau^2_{n+1}}\Big(\E\bigl[\eps_{n+1}^2 | \F_n \bigr] -M_n^2\Big)= \frac{S^2M_n(1-M_n)}{\tau_{n+1}^2}.
\end{equation*}
We now focus our attention on the asymptotic behavior of $(M_n)$.
\begin{theorem}
\label{T-tradi-as}
	The process $(M_n)$ converges to a random variable $M_\infty$ almost surely and in any $\IL^p$ for $p\geq1$. 
	The limit $M_\infty$ has a beta distribution, with parameters $\frac{\alpha}{S}$ and $\frac{\beta}{S}$.
\end{theorem}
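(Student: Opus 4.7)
{Theorem \ref{T-tradi-as}}{plan}
The plan is to prove a.s.\ and $\IL^p$ convergence by a direct appeal to Doob's theorem, and then identify the limiting distribution through a moment computation using rising factorials.

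\textbf{Convergence.} Since $0\leq M_n\leq 1$ for all $n$, the martingale $(M_n)$ is bounded, hence uniformly integrable. Doob's martingale convergence theorem therefore yields the existence of a random variable $M_\infty$, taking values in $[0,1]$, such that $M_n\to M_\infty$ almost surely. Because $|M_n|\leq 1$, dominated convergence immediately upgrades this to convergence in $\IL^p$ for every $p\geq 1$.

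\textbf{Identification of the limit.} I will compute all moments of $M_\infty$ and check that they coincide with those of the Beta$(\alpha/S,\beta/S)$ law. For $k\geq 1$, set
\begin{equation*}
T_k(n)=\E\!\left[\frac{\prod_{j=0}^{k-1}(X_n+jS)}{\prod_{j=0}^{k-1}(\tau_n+jS)}\right].
\end{equation*}
Conditioning on $\F_n$ and splitting according to the value of $\eps_{n+1}$, a short computation using $X_{n+1}=X_n+S\eps_{n+1}$ and $\tau_{n+1}=\tau_n+S$ gives
\begin{equation*}
\E\!\left[\prod_{j=0}^{k-1}(X_{n+1}+jS)\,\Big|\,\F_n\right]=\frac{\tau_n+kS}{\tau_n}\prod_{j=0}^{k-1}(X_n+jS),
\end{equation*}
while $\prod_{j=0}^{k-1}(\tau_{n+1}+jS)=\frac{\tau_n+kS}{\tau_n}\prod_{j=0}^{k-1}(\tau_n+jS)$. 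The two factors cancel, so $T_k(n+1)=T_k(n)$, and consequently $T_k(n)=T_k(0)=\prod_{j=0}^{k-1}\frac{\alpha+jS}{\tau+jS}$ for every $n$.

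\textbf{Passing to the limit.} Since the integrand in the definition of $T_k(n)$ is bounded and converges almost surely to $M_\infty^k$, dominated convergence yields
\begin{equation*}
\E\!\left[M_\infty^k\right]=\prod_{j=0}^{k-1}\frac{\alpha+jS}{\tau+jS}=\frac{\bigl(\alpha/S\bigr)^{(k)}}{\bigl(\tau/S\bigr)^{(k)}},
\end{equation*}
where $a^{(k)}=a(a+1)\cdots(a+k-1)$ denotes the rising factorial. These are precisely the moments of the Beta$(\alpha/S,\beta/S)$ distribution, as can be seen from the Beta integral $B(p+k,q)/B(p,q)=p^{(k)}/(p+q)^{(k)}$. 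Since a probability measure supported on the compact interval $[0,1]$ is uniquely determined by its moments, $M_\infty$ has the announced Beta distribution.

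\textbf{Main obstacle.} The only non-cosmetic step is the exact cancellation that makes $T_k(n)$ constant; once this invariance is set up correctly, everything else follows from boundedness and the moment characterisation of Beta laws on $[0,1]$.
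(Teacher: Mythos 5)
Your proof is correct. A couple of remarks on its relationship to the paper.

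The paper does not prove Theorem \ref{T-tradi-as} itself; it simply attributes the result to Freedman (Theorem 2.2 of \cite{freedman65}) in the accompanying remark and moves on. So there is no in-paper proof to compare against — your argument stands alone as a self-contained filling of that citation. Within the paper's own conventions, the first part (boundedness $\Rightarrow$ uniform integrability $\Rightarrow$ Doob, then dominated convergence for the $\IL^p$ upgrade) is exactly the kind of step the paper elides, and it is correct.

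For the identification of the limit, the rising-factorial argument is the classical one: you in fact show that
$$\frac{\prod_{j=0}^{k-1}(X_n+jS)}{\prod_{j=0}^{k-1}(\tau_n+jS)}$$
is a bounded martingale for each $k$ (constancy of $T_k(n)$ follows by taking expectations), and its almost-sure limit is $M_\infty^k$ because each factor $\frac{X_n+jS}{\tau_n+jS}\to M_\infty$ a.s.\ and lies in $[0,1]$. The cancellation you check is the heart of the matter and is correct, including the degenerate boundary case $X_n=0$ (both sides vanish). The moment matching with the Beta law via $\E[B^k]=p^{(k)}/(p+q)^{(k)}$ with $p=\alpha/S$, $q=\beta/S$, $p+q=\tau/S$ is right, and invoking Hausdorff moment determinacy on $[0,1]$ closes the argument cleanly. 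One tiny caveat worth a clause in the writeup: the paper allows $\alpha=0$ or $\beta=0$ (only $\tau\geq1$ is assumed), in which case the "Beta distribution" degenerates to a point mass at $0$ or $1$; your moment formula still returns the correct constant sequence, so no harm is done, but it is worth saying so explicitly since Beta$(p,q)$ is usually defined only for $p,q>0$.
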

\begin{remark}
	This results was first proved by Freedman, Theorem 2.2 in \cite{freedman65}.
\end{remark}

Our first new result on the gaussian fluctuation of $(M_n)$ is as follows.
\begin{theorem}
\label{T-tradi-dis}
We have the following convergence in distribution
\begin{equation}
\label{CVMNth-norm}	
  \sqrt{n}\frac{ M_\infty - M_n}{\sqrt{M_n(1-M_n)}} \underset{n\to\infty}{\overset{\cL}{\longrightarrow}} \cN\big(0,1\big)
\end{equation}
 \end{theorem}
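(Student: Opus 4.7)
My plan is to apply a martingale central limit theorem to the tail
$M_\infty - M_n = \sum_{k\geq n}\Delta M_{k+1}$, which is well defined because the bounded
martingale $(M_n)$ converges in $\IL^2$ to $M_\infty$ by Theorem~\ref{T-tradi-as}.
To this end, for each $n$ I would introduce the triangular array of martingale differences
\begin{equation*}
X_{n,k}=\sqrt{n}\,\Delta M_{n+k},\qquad k\geq 1,
\end{equation*}
adapted to the filtration $(\F_{n+k})_{k\geq 0}$, so that $\sum_{k\geq 1}X_{n,k}=\sqrt{n}(M_\infty-M_n)$.

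The first step would be to compute the asymptotic conditional variance. Using the identity
$\E[\Delta M_{k+1}^2|\F_k]=S^2 M_k(1-M_k)/\tau_{k+1}^2$ established just before the theorem,
\begin{equation*}
V_n \;:=\; \sum_{k\geq 1}\E\bigl[X_{n,k}^2\,\big|\,\F_{n+k-1}\bigr]
 \;=\; n\sum_{j\geq n}\frac{S^2 M_j(1-M_j)}{\tau_{j+1}^2}.
\end{equation*}
Since $\tau_j\sim jS$, an integral comparison gives $n\sum_{j\geq n}S^2/\tau_{j+1}^2\to 1$, and
combining this with $M_j\to M_\infty$ almost surely and the uniform bound $M_j(1-M_j)\leq 1/4$,
a Toeplitz/Kronecker-type argument yields $V_n\to M_\infty(1-M_\infty)$ almost surely.

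Next, the conditional Lindeberg condition is immediate from the uniform bound
$|X_{n,k}|\leq \sqrt{n}\,S/\tau_{n+k}=O\!\bigl(\sqrt{n}/(n+k)\bigr)$, which tends to $0$
as $n\to\infty$. Applying the stable martingale CLT (e.g.\ Hall--Heyde~\cite{Hall80},
Theorem~3.2, or the corresponding result in Duflo~\cite{Duflo97}) therefore gives the stable convergence
\begin{equation*}
\sqrt{n}(M_\infty-M_n)\;\longrightarrow\;\sqrt{M_\infty(1-M_\infty)}\,G,
\end{equation*}
with $G\sim\cN(0,1)$ independent of $\F_\infty$; equivalently, the pair
$\bigl(\sqrt{n}(M_\infty-M_n),M_\infty\bigr)$ converges in distribution to
$\bigl(\sqrt{M_\infty(1-M_\infty)}\,G,\,M_\infty\bigr)$.

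Finally, since $M_\infty$ is Beta-distributed one has $M_\infty(1-M_\infty)>0$ almost surely,
so the continuous mapping theorem yields
$\sqrt{n}(M_\infty-M_n)/\sqrt{M_\infty(1-M_\infty)}\liml\cN(0,1)$; together with
$M_n\to M_\infty$ a.s.\ and Slutsky's lemma, this permits replacing $M_\infty(1-M_\infty)$
by $M_n(1-M_n)$ in the denominator, giving exactly~\eqref{CVMNth-norm}. The main obstacle
is precisely the random nature of the limiting variance: a classical CLT with deterministic
normalisation is unavailable, so one genuinely needs the stable convergence framework (or
an equivalent conditional-characteristic-function argument given $\F_n$) to legitimise the
normalisation by the limit-dependent quantity $M_n(1-M_n)$.
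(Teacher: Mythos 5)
Your proof is correct and follows essentially the same approach as the paper: both arguments hinge on a stable (mixed-normal) CLT applied to the martingale tail $M_\infty - M_n$, with the random asymptotic conditional variance $M_\infty(1-M_\infty)$ identified and then eliminated by a Slutsky self-normalization. The difference is only in the packaging: the paper applies Heyde's 1977 tail-CLT directly (part (b) of Theorem 1 and its corollaries), which is stated precisely for $(M_\infty - M_n)/\sqrt{\langle M\rangle_\infty - \langle M\rangle_n}$, whereas you recast the tail as a triangular array $X_{n,k}=\sqrt{n}\,\Delta M_{n+k}$ and invoke the Hall--Heyde stable CLT. One small caveat: Hall--Heyde Theorem~3.2 is stated for finite arrays $\sum_{i=1}^{k_n}$, so to handle the infinite tail sum cleanly you would either need a truncation/remainder step, or simply cite the result that is tailor-made for this situation --- which is exactly the Heyde (1977) tail-sum CLT the paper uses. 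Your computations of the conditional variance ($n\sum_{j\geq n} S^2 M_j(1-M_j)/\tau_{j+1}^2 \to M_\infty(1-M_\infty)$ a.s.\ via Toeplitz) and of the Lindeberg condition (trivially satisfied since $\sup_k|X_{n,k}|\leq \sqrt{n}S/\tau_{n+1}\to 0$ deterministically) are both correct, and your explicit emphasis on why stable convergence, rather than an ordinary CLT, is needed to legitimise the random normalisation $\sqrt{M_n(1-M_n)}$ is exactly the right point.
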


\section{Gereralized Pólya urn model}

This model corresponds to the case where the replacement matrix is not diagonal, 
\begin{equation*}
	R= \begin{pmatrix} a & b \\ c & d \end{pmatrix}.
\end{equation*}
Let us rewrite 
\begin{equation*}
	X_n=\alpha + a\sum_{k=1}^n \eps_k + c\sum_{k=1}^n (1-\eps_k)
\end{equation*}
where the conditional distribution of $\eps_{n+1}$ given the past up to time $n$ is $\cL(\eps_{n+1} |\F_n)=\cB(\tau_n^{-1}X_n)$. We have
\begin{equation*}
	U_{n+1}= U_n +R^T \begin{pmatrix}	\eps_{n+1} \\ 1-\eps_{n+1}\end{pmatrix}
\end{equation*}
and
\begin{equation*}
	U_{n} - \E[U_n] = \begin{pmatrix} X_n -\E[X_n] \\ Y_n - \E[Y_n] \end{pmatrix}
				 = \big(X_n -\E[X_n]\big)\begin{pmatrix}	1 \\ -1 \end{pmatrix}
					=  \frac{b+c}{S} \big( X_n -\E[X_n] \big) v_2.
\end{equation*}
Hence, we obtain that
\begin{eqnarray}
	 \E\big[U_{n+1} - \E[U_{n+1}]|\F_n\big] & = & \nonumber U_{n} - \E[U_n] + R^T\E\Big[ \begin{pmatrix}	\eps_{n+1} \\ 1-\eps_{n+1}\end{pmatrix} - \E\big[ \begin{pmatrix}	
	 \eps_{n+1} \\ 1-\eps_{n+1}\end{pmatrix}\big]|\F_n\Big] \\ \nonumber
						& = & \big(I_2 + \tau_n^{-1}R^T\big)\Big(U_n - \E[U_n]\Big) \\ \nonumber
						& = & (X_n - E[X_n]\big)\big(I_2 + \tau_n^{-1}R^T\big)\begin{pmatrix}1  \\ -1\end{pmatrix} \\\nonumber
						& = & \big(1+\tau_n^{-1}m\big)\big(X_n - E[X_n]\big)\begin{pmatrix}1  \\ -1\end{pmatrix} \\ 
						& = & \big(1+\tau_n^{-1}m\big)\big(U_{n} - \E[U_n]\big). \label{EspUn-gene}
\end{eqnarray}
Finally, denote
\begin{equation}
\label{DEF-sigma-n}
	\sigma_n=\prod_{k=0}^{n-1}\big(1+\tau_k^{-1}m\big)^{-1}=\frac{\Gamma(n+\frac{\tau}{S})\Gamma(\frac{\tau}{S}+\sigma)}{\Gamma(\frac{\tau}{S})\Gamma(n+\frac{\tau}{S}+\sigma)}.
\end{equation}
One can observe that
\begin{equation}
\label{LIMIT-sigma-n}
	\lim_{n\to\infty} {n^\sigma}\sigma_n=\frac{\Gamma(\frac{\tau}{S}+\sigma)}{\Gamma(\frac{\tau}{S})}.
\end{equation}
Hereafter, we define the process $(M_n)$ by 
\begin{equation}
\label{defMn-gene}
	M_n = \sigma_n\big(U_n - \E[U_n]\big).
\end{equation}
Thanks to equation \eqref{EspUn-gene} we immediatly get that
	\begin{equation*}
	\label{Mn-martg}
		\E[M_{n+1}|\F_n] =  M_n.
	\end{equation*}
Hence, the sequence $(M_n)$ is a locally bounded and square integrable martingale.
We are now allowed to compute the quadratic variation of $(M_n)$. First of all
\begin{equation}
	\Delta M_{n+1}
	= m \sigma_{n+1} \big( \eps_{n+1} -\E[\eps_{n+1}|\F_n] \big) \begin{pmatrix} 1 \\ -1\end{pmatrix} 
	= m \sigma_{n+1} \big( \eps_{n+1} -\tau_n^{-1} X_n \big) \begin{pmatrix} 1 \\ -1\end{pmatrix} 
					\label{DeltaMn-1}.
\end{equation}
Moreover,
\begin{equation}
\label{ESP-eps-quad}
	\E\big[\big(\eps_{n+1} -\tau_n^{-1} X_n \big)^2\big | \F_n ] = \tau_n^{-1} X_n \big(1-\tau_n^{-1} X_n\big).
\end{equation}
 Consequently, we obtain from \eqref{DeltaMn-1} and \eqref{ESP-eps-quad} that
\begin{equation}
 	\E\big[\Delta M_{n+1} \Delta M_{n+1}^T \big | \F_n ] = 
		m^2\sigma_{n+1}^{2}\tau_n^{-1} X_n \big(1-\tau_n^{-1} X_n\big) \begin{pmatrix} 1 & -1 \\ -1 & 1 \end{pmatrix}.
	\label{ESP-delta-Mn-T}
 \end{equation} 
Therefore
\begin{eqnarray}
	\langle M\rangle_n & = & \sum_{k=0}^{n-1} \E\big[\Delta M_{k+1} \Delta M_{k+1}^T \big | \F_k ] \nonumber\\ 
						& = &	m^2 \begin{pmatrix}1 & -1 \\ -1 & 1\end{pmatrix}
						\sum_{k=0}^{n-1}\sigma_{k+1}^{2}\tau_k^{-1} X_k \big(1-\tau_k^{-1} X_k\big) \label{quadMn}.
\end{eqnarray}
It is not hard to see that
\begin{equation}
\label{Tr-wn}
 	\text{Tr} \langle M\rangle_n \leq m^2 w_n \hspace{1cm}\text{where}\hspace{1cm} w_n = \sum_{k=1}^{n}\sigma_{k}^{2}.
 \end{equation}
The asymptotic behavior of $(M_n)$ is closely related to the one of $(w_n)$ with the following trichotomy
\begin{itemize}
	\item The diffusive regime where $\sigma <1/2$ : the urn is said to be small and we have
	\begin{equation*}
		\lim_{n\to\infty} \frac{w_n}{n^{1-2\sigma}} = \frac{\lambda^2}{1-2\sigma} \hspace{1cm} \text{where} 
	\hspace{1cm} \lambda= \frac{\Gamma(\frac{\tau}{S}+\sigma)}{\Gamma(\frac{\tau}{S})}.
	\end{equation*}
	\item The critical regime where $\sigma = 1/2$ : the urn is said to be critically small and we have
	\begin{equation*}
		\lim_{n\to\infty} \frac{w_n}{\log n} =  \frac{\Gamma(\frac{\tau}{S}+\frac{1}{2})}{\Gamma(\frac{\tau}{S})}.
	\end{equation*}
	\item The superdiffusive regime where $\sigma > 1/2$ : the urn is said to be large and we have
	\begin{equation*}
		\lim_{n\to\infty} {w_n} = \sum_{k=0}^{\infty} 
	\Big(\frac{\Gamma(k+\frac{\tau}{S})\Gamma(\frac{\tau}{S}+\sigma)}{\Gamma(\frac{\tau}{S})\Gamma(k+\frac{\tau}{S}+\sigma)}\Big)^2.
	\end{equation*}
\end{itemize}

\begin{proposition}
\label{P-ESP-limit}
We have for small and large urns
	\begin{equation}
		\E[U_n] = n v_1 + \sigma_n^{-1} \Big(\frac{b\alpha - c\beta}{S}\Big)v_2 + \frac{\tau}{S}v_1. \label{ESP-sigma}
	\end{equation}
\end{proposition}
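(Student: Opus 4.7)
The plan is to turn the statement into an exact linear recursion for the deterministic vector $\E[U_n]$, and then diagonalise in the eigenbasis of $R^T$. Taking expectations in
\begin{equation*}
U_{n+1} = U_n + R^T \begin{pmatrix}\eps_{n+1} \\ 1-\eps_{n+1}\end{pmatrix},
\end{equation*}
combined with $\E[\eps_{n+1}\mid\F_n] = \tau_n^{-1}X_n$ and $X_n + Y_n = \tau_n$, yields the closed recursion
\begin{equation*}
\E[U_{n+1}] = \bigl(I_2 + \tau_n^{-1}R^T\bigr)\E[U_n],
\end{equation*}
which is nothing but the non-centered analogue of the computation carried out for \eqref{EspUn-gene}.

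Next I would decompose $\E[U_n] = a_n v_1 + b_n v_2$ in the eigenbasis of $R^T$. Since $R^T v_1 = S v_1$ and $R^T v_2 = m v_2$, this recursion decouples into
\begin{equation*}
a_{n+1} = \frac{\tau_{n+1}}{\tau_n}\,a_n, \qquad b_{n+1} = \bigl(1 + \tau_n^{-1}m\bigr)\,b_n.
\end{equation*}
The first telescopes to $a_n = (\tau_n/\tau)\,a_0$, while the second gives $b_n = \sigma_n^{-1}\,b_0$ straight from the definition \eqref{DEF-sigma-n}.

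It only remains to read off the initial coordinates $(a_0, b_0)$ of $U_0 = (\alpha,\beta)^T$ in the basis $\{v_1, v_2\}$. Using the explicit formulas $v_1 = \tfrac{S}{b+c}(c,b)^T$ and $v_2 = \tfrac{S}{b+c}(1,-1)^T$, adding and subtracting the two coordinate equations of $U_0 = a_0 v_1 + b_0 v_2$ immediately gives $a_0 = \tau/S$ and $b_0 = (b\alpha - c\beta)/S$. Substituting back, one finds $a_n v_1 = \tfrac{\tau + nS}{S}\,v_1 = n v_1 + \tfrac{\tau}{S}\,v_1$, which combined with $b_n v_2 = \sigma_n^{-1}\tfrac{b\alpha - c\beta}{S}\,v_2$ is exactly \eqref{ESP-sigma}.

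The whole argument is purely deterministic linear algebra once the expectation recursion is written down, so there is no real obstacle: the only thing to be careful about is the bookkeeping of the change of basis $(\alpha,\beta) \mapsto (a_0,b_0)$, and the single computation covers both the small and large urn cases since no asymptotics of $\sigma_n$ ever enter.
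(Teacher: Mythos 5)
Your proof is correct and is essentially the same as the paper's: both diagonalize the recursion matrix $I_2 + \tau_n^{-1}R^T$ in the eigenbasis $\{v_1,v_2\}$ of $R^T$, and the only cosmetic difference is that you track the scalar coordinates $(a_n,b_n)$ through telescoping products where the paper assembles the explicit matrix product $T_n = PD_nP^{-1}$ and applies it to $U_0$. (One tiny nit: extracting $b_0 = (b\alpha - c\beta)/S$ is not literally ``adding and subtracting'' the two coordinate equations of $U_0 = a_0 v_1 + b_0 v_2$; it is the combination $b\cdot(\text{first}) - c\cdot(\text{second})$, or equivalently back-substitution after finding $a_0 = \tau/S$, but this does not affect correctness.)
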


\begin{proof}{Proposition}{\ref{P-ESP-limit}}
	First of all, denote $\Lambda_n = I_2 + \tau_n^{-1}R^T = P \big(I_2 +\tau_n^{-1}D\big) P^{-1}$ and $T_n= \prod_{k=0}^{n-1} \Lambda_k$. For any $n\in\N$, $T_n$ is diagonalisable and 
	\begin{equation*}
		T_n = P D_n P^{-1} = \frac{1}{b+c}\begin{pmatrix} c & 1 \\ b & -1\end{pmatrix}
			\begin{pmatrix} \tau_n/\tau & 0 \\ 0 & \sigma_n^{-1}\end{pmatrix}
			\begin{pmatrix} 1 & 1 \\ b & -c\end{pmatrix}.
	\end{equation*}
	Since $E[U_{n+1}|\F_n]= \Lambda_n U_n$ we easily get that $\E[U_n]= T_n U_0$, which leads to
	\begin{eqnarray*}
		\E[U_n] & = & \frac{1}{b+c}\Big(\frac{\tau_n}{\tau} \begin{pmatrix} c & c \\ b & b \end{pmatrix} 
				+ \sigma_n^{-1} \begin{pmatrix} b & -c \\ -b & c \end{pmatrix}\Big)U_0\\
				& = & n v_1 + \frac{\tau}{S}v_1 + \sigma_n^{-1} \frac{b\alpha - c\beta}{S}v_2.
	\end{eqnarray*}
\end{proof}

\subsection{Small urns}

The almost sure convergence of $(U_n)$ for small urns is due to Janson, Theorem 3.16 in \cite{Janson04}.
\begin{theorem}
\label{T-general-as-small}
	When the urn is small, $\sigma <1/2$, we have the following convergence
	\begin{equation}
	\label{general-as-small}
		\lim_{n\to\infty}\frac{U_n}{n} = v_1
	\end{equation}
	almost surely and in any $\IL^p$, $p\geq 1$.	
\end{theorem}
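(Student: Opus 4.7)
{Theorem}{\ref{T-general-as-small}}[Plan]

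The plan is to split the composition into its mean part and its martingale fluctuation and to control them separately using Proposition~\ref{P-ESP-limit} and the tools already set up for $(M_n)$. Write
\[
	\frac{U_n}{n} = \frac{\E[U_n]}{n} + \frac{1}{n}\sigma_n^{-1} M_n.
\]
For the deterministic term, Proposition~\ref{P-ESP-limit} gives
\[
	\frac{\E[U_n]}{n} = v_1 + \frac{\tau}{Sn}v_1 + \frac{\sigma_n^{-1}}{n}\frac{b\alpha-c\beta}{S}v_2,
\]
and by \eqref{LIMIT-sigma-n} we have $\sigma_n^{-1}\sim \lambda^{-1}n^{\sigma}$. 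Since $\sigma<1/2<1$, the last two contributions vanish and $\E[U_n]/n\to v_1$.

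For the stochastic term, first note from \eqref{DeltaMn-1} that every increment $\Delta M_{n+1}$ lies in $\R(1,-1)^T$, so $(M_n)$ itself lives in this one-dimensional subspace. Writing $M_n = m_n(1,-1)^T$ for a scalar $\F_n$-martingale $(m_n)$, \eqref{ESP-delta-Mn-T} and \eqref{Tr-wn} yield $\langle m\rangle_n\leq \tfrac{m^2}{2}w_n$. In the small regime, $w_n\sim \frac{\lambda^2}{1-2\sigma}n^{1-2\sigma}$ diverges, so the strong law for square-integrable martingales (see e.g.\ \cite{Duflo97}) applies and gives, for every $\delta>0$,
\[
	m_n^2 = o\bigl(\langle m\rangle_n (\log \langle m\rangle_n)^{1+\delta}\bigr) = o\bigl(n^{1-2\sigma}(\log n)^{1+\delta}\bigr) \quad\text{a.s.}
\]
Combined with $\sigma_n^{-1}\sim \lambda^{-1}n^{\sigma}$, this leads to
\[
	\Bigl\lVert \frac{U_n-\E[U_n]}{n}\Bigr\rVert = \frac{\sqrt{2}\,\sigma_n^{-1}|m_n|}{n} = o\bigl(n^{-1/2}(\log n)^{(1+\delta)/2}\bigr) \longrightarrow 0 \quad\text{a.s.,}
\]
which together with the first step establishes \eqref{general-as-small} almost surely.

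Finally, the $\IL^p$ convergence for every $p\geq 1$ is immediate: since $0\leq X_n\leq \tau_n$ and $Y_n=\tau_n-X_n$, each component of $U_n/n$ is bounded by $\tau_n/n = S + \tau/n$, so the sequence $(\lVert U_n/n\rVert^p)_n$ is uniformly bounded and dominated convergence upgrades the almost sure convergence. The only non-routine step in this plan is the appeal to the strong law for the martingale $(m_n)$ with the right rate; an alternative, if one wants to avoid a general citation, is to apply Robbins--Siegmund to the submartingale $(m_n^2)$ using the summability of $\sum_k \sigma_{k+1}^2 k^{-1+2\sigma}$ that follows from the asymptotics of $w_n$ already recorded in the trichotomy.
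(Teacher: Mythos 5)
Your proof is correct and follows essentially the same route as the paper: both control the martingale $(M_n)$ via the strong law of large numbers for square-integrable martingales to get $\|M_n\|^2 = o(n^{1-2\sigma}(\log n)^{1+\delta})$ a.s., translate this back to $U_n - \E[U_n]$ using the asymptotics of $\sigma_n$, and obtain $\IL^p$ convergence from uniform boundedness. Your observation that $M_n$ lives entirely in $\R(1,-1)^T$ (so the multivariate strong law can be replaced by the scalar one for $m_n$), and your explicit use of Proposition~\ref{P-ESP-limit} to dispatch the deterministic part $\E[U_n]/n \to v_1$, are both small but welcome clarifications of steps the paper leaves implicit.
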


Our new refinements on the almost sure rates of convergence are as follows.

\begin{theorem}
\label{T-general-LFQLIL-small}
When the urn is small and $bc\neq0$, we have the quadratic strong law
\begin{equation}
\label{LFQsmall}
 \lim_{n \rightarrow \infty} \frac{1}{\log n} \sum_{k=1}^n \frac{1}{k^2}(U_k-kv_1) (U_k-kv_1)^T=\frac{1}{1-2\sigma}\frac{bcm^2}{(b+c)^2}\begin{pmatrix}1 & -1 \\ -1 & 1\end{pmatrix} \hspace{1cm} \text{a.s.}
\end{equation}
In particular,
\begin{equation}
\lim_{n \rightarrow \infty} \frac{1}{ \log n} \sum_{k=1}^n  \frac{\|U_k-kv_1\|^2}{k^2}=  \frac{2}{1-2\sigma}\frac{bcm^2}{(b+c)^2} \hspace{1cm} \text{a.s.}
\label{LFQNORMsmall}
\end{equation}
Moreover, we have the law of iterated logarithm
\begin{equation}
 \limsup_{n \rightarrow \infty} \frac{\|U_n-nv_1\|^2}{2 n \log \log n} =  \frac{2}{1-2\sigma}\frac{bcm^2}{(b+c)^2} \hspace{1cm} \text{a.s.}  
\label{LILsmall}
\end{equation}
\end{theorem}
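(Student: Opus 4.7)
{Theorem}{\ref{T-general-LFQLIL-small}}[Proof plan]

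The plan is to reduce everything to a one-dimensional martingale and then apply the standard scalar quadratic strong law and law of iterated logarithm. Observe that from $U_n-\E[U_n]=(X_n-\E[X_n])\bigl(\begin{smallmatrix}1\\-1\end{smallmatrix}\bigr)$ the martingale $(M_n)$ takes values in the line generated by $\bigl(\begin{smallmatrix}1\\-1\end{smallmatrix}\bigr)$, so it is enough to study the scalar martingale
\begin{equation*}
Z_n = \sigma_n\bigl(X_n-\E[X_n]\bigr),
\end{equation*}
whose predictable quadratic variation is
\begin{equation*}
\langle Z\rangle_n = m^2\sum_{k=0}^{n-1}\sigma_{k+1}^{2}\,\tau_k^{-1}X_k\bigl(1-\tau_k^{-1}X_k\bigr).
\end{equation*}
The martingale increments are bounded: $|\Delta Z_{n+1}|\leq m\sigma_{n+1}$, which together with $\sigma_n\sim\lambda n^{-\sigma}$ will trivially imply the Lindeberg-type and fourth moment conditions needed for the classical scalar LIL and QSL (see Stout or Duflo, as used in \cite{Bercu18}).

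The first key step is to compute the almost sure asymptotics of $\langle Z\rangle_n$. From Theorem \ref{T-general-as-small} we have $X_n/\tau_n\to c/(b+c)$ a.s., so $\tau_n^{-1}X_n(1-\tau_n^{-1}X_n)\to bc/(b+c)^2$ a.s. Combining this with the Cesàro-type lemma applied to the weights $\sigma_{k+1}^2$ and with \eqref{Tr-wn} and the diffusive asymptotics of $w_n$ recalled in the trichotomy, I obtain
\begin{equation*}
\lim_{n\to\infty}\frac{\langle Z\rangle_n}{n^{1-2\sigma}}
=\frac{m^2 bc\,\lambda^2}{(b+c)^2(1-2\sigma)}\quad\text{a.s.}
\end{equation*}
The hypothesis $bc\neq 0$ is crucial here, as it guarantees that this limit is positive and thus that $\langle Z\rangle_n\to\infty$ a.s., which is required for both the LIL and the QSL.

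I then apply the scalar LIL for square-integrable martingales with bounded increments to get $\limsup_n Z_n^2/(2\langle Z\rangle_n\log\log\langle Z\rangle_n)=1$ a.s., and the Chaabane–Maaouia quadratic strong law in the form
\begin{equation*}
\lim_{n\to\infty}\frac{1}{\log n}\sum_{k=1}^n\frac{Z_k^2}{k^{2-2\sigma}}
=\frac{m^2 bc\,\lambda^2}{(b+c)^2(1-2\sigma)}\quad\text{a.s.},
\end{equation*}
using that $f(n)=n^{1-2\sigma}$ satisfies $(f(k)-f(k-1))/f(k)\sim(1-2\sigma)/k$ and $\log f(n)\sim(1-2\sigma)\log n$.

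Finally, I translate these scalar statements back to $U_n-nv_1$. By Proposition \ref{P-ESP-limit} and since $v_2=\frac{S}{b+c}\bigl(\begin{smallmatrix}1\\-1\end{smallmatrix}\bigr)$,
\begin{equation*}
U_n-nv_1 = \sigma_n^{-1}\Bigl(Z_n+\tfrac{b\alpha-c\beta}{b+c}\Bigr)\begin{pmatrix}1\\-1\end{pmatrix} + \tfrac{\tau}{S}v_1,
\end{equation*}
and using $\sigma_n^{-1}\sim\lambda^{-1}n^{\sigma}$ the two deterministic correction terms are $O(n^\sigma)$ and $O(1)$, hence negligible compared to $\sqrt{n\log\log n}$ (for LIL) and, after squaring and summing, absorbed into the $o(\log n)$ remainder (for QSL). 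Expanding $(U_k-kv_1)(U_k-kv_1)^T$ then produces $\sigma_k^{-2}Z_k^2\bigl(\begin{smallmatrix}1&-1\\-1&1\end{smallmatrix}\bigr)$ plus lower-order terms, and substituting $k^{-2}\sigma_k^{-2}\sim\lambda^{-2}k^{-(2-2\sigma)}$ yields simultaneously \eqref{LFQsmall}, \eqref{LFQNORMsmall} (by taking the trace) and \eqref{LILsmall}.

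The main obstacle is not any single step but the careful bookkeeping of the three error sources: the Cesàro-type replacement of $\tau_k^{-1}X_k(1-\tau_k^{-1}X_k)$ by its limit in $\langle Z\rangle_n$, the negligibility of the deterministic drift $\frac{b\alpha-c\beta}{b+c}$ and of the constant $\frac{\tau}{S}v_1$, and the cross terms arising when passing from $Z_k^2$ to the matrix product $(U_k-kv_1)(U_k-kv_1)^T$; each must be shown to be $o(\log n)$ (resp. $o(n\log\log n)$) almost surely, which is where the Kronecker lemma and the precise rate $\sigma<1/2$ intervene.
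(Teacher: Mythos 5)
Your plan is correct and follows essentially the same route as the paper: both proofs reduce to a one-dimensional martingale (you work with $Z_n=\sigma_n(X_n-\E[X_n])$ directly; the paper equivalently applies the scalar results to the projections $M_n(u)=\langle u,M_n\rangle$), both identify the a.s.\ limit of the normalized predictable quadratic variation via $\tau_n^{-1}X_n\to c/(b+c)$ and a Toeplitz/Ces\`aro argument, both invoke a scalar quadratic strong law and Stout's LIL for bounded-increment martingales, and both pass from $U_n-\E[U_n]$ to $U_n-nv_1$ via Proposition~\ref{P-ESP-limit} using that the drift error is $O(n^\sigma)=o(\sqrt{n})$. The only cosmetic difference is the citation (you invoke the Chaabane--Maaouia form of the QSL with the explicit rate function $f(n)=n^{1-2\sigma}$, whereas the paper uses Theorem~3 of \cite{Bercu04} with the weights $f_n=\sigma_n^2/w_n$ and then converts $\log w_n\sim(1-2\sigma)\log n$); these are interchangeable. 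Your identification of the three error sources and why $\sigma<1/2$ makes them negligible is the right bookkeeping and matches what the paper's proof does implicitly.
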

\begin{remark}
	The law of iterated logarithm for $(X_n)$ was previously established by Bai, Hu and Zhang via a strong approximation argument, see Corollary 2.1 in \cite{bai02}.
\end{remark}
\begin{theorem}
\label{T-general-dis-small}
	When the urn is small and $bc\neq0$, we have the following convergence asymptotic normality
	\begin{equation}
		\frac{U_n - n v_1}{\sqrt{n}} \overset{\cL}{\underset{n\to\infty}{\longrightarrow}} 
	\cN \big(0,\Gamma\big)
	\end{equation}
	where $\displaystyle \Gamma =\frac{1}{1-2\sigma}\frac{bcm^2}{(b+c)^2}\begin{pmatrix}1 & -1 \\ -1 & 1\end{pmatrix}$.
\end{theorem}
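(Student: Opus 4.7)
The plan is to apply the multivariate central limit theorem for square-integrable martingales to $(M_n)$ defined in \eqref{defMn-gene} and then translate the result into a statement about $(U_n - nv_1)/\sqrt{n}$.

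First, I would decompose
\begin{equation*}
  \frac{U_n - nv_1}{\sqrt{n}} = \frac{\sigma_n^{-1} M_n}{\sqrt{n}} + \frac{\E[U_n] - nv_1}{\sqrt{n}}.
\end{equation*}
By Proposition~\ref{P-ESP-limit}, $\E[U_n] - nv_1 = \sigma_n^{-1}\bigl((b\alpha-c\beta)/S\bigr)v_2 + (\tau/S)v_1$, and since $\sigma_n^{-1} = O(n^\sigma)$ by \eqref{LIMIT-sigma-n}, this deterministic remainder, divided by $\sqrt{n}$, tends to $0$ because $\sigma < 1/2$. So it is enough to prove that $\sigma_n^{-1} M_n / \sqrt{n}$ converges in law to $\cN(0, \Gamma)$.

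Next, I would identify the right normalisation for $(M_n)$. From \eqref{LIMIT-sigma-n}, $n^\sigma \sigma_n \to \lambda$, so that $\sigma_n^{-1}/\sqrt{n} \sim n^{\sigma - 1/2}/\lambda$. Combined with $n^{1-2\sigma} = (n^{1/2 - \sigma})^2$, this suggests normalising $M_n$ by $\sqrt{n^{1-2\sigma}}$. I would show
\begin{equation*}
  \frac{\langle M \rangle_n}{n^{1-2\sigma}} \underset{n\to\infty}{\longrightarrow} \frac{\lambda^2}{1-2\sigma}\,\frac{bcm^2}{(b+c)^2}\begin{pmatrix} 1 & -1 \\ -1 & 1 \end{pmatrix} \quad \text{a.s.}
\end{equation*}
This uses the expression \eqref{quadMn} together with the almost sure convergence $\tau_k^{-1} X_k \to c/(b+c)$ provided by Theorem~\ref{T-general-as-small} (so that $\tau_k^{-1}X_k(1 - \tau_k^{-1}X_k) \to bc/(b+c)^2$), the asymptotics $\sigma_k^2 \sim \lambda^2/k^{2\sigma}$, and a Cesàro/Toeplitz argument on $w_n$ as recorded in the trichotomy preceding Proposition~\ref{P-ESP-limit}.

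To apply the martingale CLT, I would then verify the Lindeberg condition. From \eqref{DeltaMn-1}, $\|\Delta M_{k+1}\| \leq \sqrt{2}\,|m|\sigma_{k+1} = O(k^{-\sigma})$ almost surely, which is uniformly bounded (and in fact vanishing). Hence for every $\eps > 0$, and large enough $k$, the indicators $\1_{\{\|\Delta M_{k+1}\| > \eps\, n^{(1-2\sigma)/2}\}}$ vanish, so the conditional Lindeberg sum divided by $n^{1-2\sigma}$ tends to $0$. The CLT then yields
\begin{equation*}
  \frac{M_n}{n^{(1-2\sigma)/2}} \overset{\cL}{\underset{n\to\infty}{\longrightarrow}} \cN\bigl(0, \Sigma\bigr), \qquad \Sigma = \frac{\lambda^2}{1-2\sigma}\,\frac{bcm^2}{(b+c)^2}\begin{pmatrix} 1 & -1 \\ -1 & 1 \end{pmatrix}.
\end{equation*}
Multiplying by $\sigma_n^{-1}/\sqrt{n} \to 1/\lambda$ (after renormalising by $n^{\sigma-1/2}$) and using Slutsky's lemma, the covariance gets divided by $\lambda^2$, producing exactly $\Gamma$, which finishes the proof.

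The main obstacle I foresee is the clean identification of the limit of $\langle M\rangle_n / n^{1-2\sigma}$: one must carefully combine the deterministic asymptotics of $\sigma_k$ with the almost sure behaviour of $\tau_k^{-1} X_k$ inside the sum. Since $\langle M\rangle_n$ is a rank-one matrix times a scalar sum, a single Toeplitz-type lemma applied to that scalar sum suffices, but one has to check that the almost sure (rather than $L^2$) convergence is enough to conclude — it is, because the resulting limit is deterministic and the Lindeberg condition holds almost surely. Everything else is routine manipulation.
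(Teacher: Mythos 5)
Your proof follows essentially the same route as the paper: apply a multivariate martingale CLT to $(M_n)$, identify the a.s.\ limit of the normalised quadratic variation, verify Lindeberg, and then translate back to $U_n - nv_1$ using Proposition~\ref{P-ESP-limit} and the asymptotics \eqref{LIMIT-sigma-n} via Slutsky. Your normalisation by $n^{1-2\sigma}$ rather than $w_n$ is cosmetic, since $w_n \sim \lambda^2 n^{1-2\sigma}/(1-2\sigma)$; the limiting covariance and the final rescaling by $\sigma_n^{-1}/\sqrt{n}\sim n^{\sigma-1/2}/\lambda$ are handled correctly.

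One step needs care, though. In the Lindeberg verification you assert that $\|\Delta M_{k+1}\| \le \sqrt{2}\,|m|\,\sigma_{k+1} = O(k^{-\sigma})$ is \emph{uniformly bounded (and in fact vanishing)}. That is only true when $\sigma > 0$. The small-urn regime is $\sigma < 1/2$, which includes $\sigma \le 0$, and when $\sigma < 0$ one has $\sigma_k \sim \lambda\, k^{|\sigma|} \to \infty$, so the increments are not bounded. The conclusion nevertheless survives: for all $k \le n-1$ the increments are $O(n^{|\sigma|})$, while the threshold is $\eps\, n^{(1-2\sigma)/2} = \eps\, n^{1/2 + |\sigma|}$, which dominates for large $n$; hence the indicators all vanish for $n$ large and the Lindeberg sum is eventually zero. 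You should rephrase the estimate this way (comparing the growth of $\max_{k \le n}\sigma_k$ with that of the threshold) rather than claiming uniform boundedness. The paper sidesteps this by a fourth-moment (Lyapunov) bound, $\frac{1}{w_n^2}\sum_{k \le n}\sigma_k^4 \to 0$, which works uniformly over all $\sigma < 1/2$ without the case distinction.
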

\begin{remark}
	An invariance principle for $(X_n)$ was proved by Gouet, see Proposition 2.1 in \cite{gouet93}.
\end{remark}

\subsection{Critically small urns}

The almost sure convergence of $(U_n)$ for critically small urns is again due to Janson, Theorem 3.16 in \cite{Janson04}.
\begin{theorem}
\label{T-general-as-crit}
	When the urn is critically small, $\sigma=1/2$, we have the following convergence
	\begin{equation}
	\label{general-as-crit}
		\lim_{n\to\infty}\frac{U_n}{n} = v_1
	\end{equation}
	almost surely and in any $\IL^p$, $p\geq 1$.	
\end{theorem}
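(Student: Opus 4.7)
My plan is to adapt the argument from the small-urn case (Theorem \ref{T-general-as-small}), leveraging the martingale $(M_n)$ introduced in \eqref{defMn-gene}. The only genuinely new ingredient is that one must handle the borderline growth $w_n \sim \lambda \log n$, with $\lambda = \Gamma(\tfrac{\tau}{S}+\tfrac{1}{2})/\Gamma(\tfrac{\tau}{S})$, instead of the polynomial rate $n^{1-2\sigma}$ available in the strictly small regime.

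First I would apply the strong law for square-integrable vector martingales to $(M_n)$. Since $\text{Tr}\langle M\rangle_n \leq m^2 w_n \to \infty$ almost surely, the standard refinement (see e.g.\ \cite{Duflo97}) gives, for any $\eps>0$,
\begin{equation*}
\|M_n\|^2 = o\bigl(w_n (\log w_n)^{1+\eps}\bigr) = o\bigl(\log n \, (\log\log n)^{1+\eps}\bigr) \quad \text{a.s.}
\end{equation*}
Next, by \eqref{LIMIT-sigma-n} at $\sigma=1/2$, one has $\sigma_n^{-1} \sim \lambda^{-1} \sqrt{n}$, and therefore
\begin{equation*}
\|U_n - \E[U_n]\|^2 = \sigma_n^{-2}\|M_n\|^2 = o\bigl(n \log n \, (\log \log n)^{1+\eps}\bigr) = o(n^2) \quad \text{a.s.}
\end{equation*}

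The computation of $\E[U_n] = T_n U_0$ carried out in the proof of Proposition \ref{P-ESP-limit} never actually uses $\sigma \neq 1/2$: it relies only on the diagonalisability of each $\Lambda_k$ and on the Gamma-function identity \eqref{DEF-sigma-n}, both of which remain valid here. Thus the formula $\E[U_n] = n v_1 + \sigma_n^{-1}((b\alpha-c\beta)/S)v_2 + (\tau/S)v_1$ continues to hold in the critical case, and in particular $\E[U_n]/n \to v_1$. Combined with the previous step, this yields $U_n/n \to v_1$ almost surely.

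For the $\IL^p$ statement I would exploit the deterministic bound $0 \leq X_n, Y_n \leq \tau_n = \tau + nS$, which makes $\|U_n/n\|$ uniformly bounded, so dominated convergence upgrades the almost sure convergence to $\IL^p$ for every $p\geq 1$. I expect the main obstacle to be the first step: the logarithmic growth of $w_n$ sits exactly at the threshold where a plain $\IL^2$ estimate only gives $\|U_n - nv_1\| = O(n^{1/2}\sqrt{\log n})$ in mean, so one must invoke the sharper form of the strong law with the extra $(\log\log)^{1+\eps}$ factor in order to beat the $n$ scaling almost surely; once this is granted, the remaining bookkeeping is routine.
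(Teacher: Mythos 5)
Your argument reproduces the paper's proof essentially step for step: strong law for vector martingales to get $\|M_n\|^2 = o(w_n(\log w_n)^{1+\eps})$, substitute $w_n \sim \lambda\log n$ and $\sigma_n^{-2}\sim \lambda^{-2}n$ to conclude $\|U_n-\E[U_n]\|^2 = o(n^2)$ a.s., then bounded convergence for the $\IL^p$ statement. (Your bookkeeping $\sigma_n^{-2}\|M_n\|^2 = o(n\log n(\log\log n)^{1+\eps})$ is actually cleaner than the paper's displayed bound, and your explicit remark that Proposition \ref{P-ESP-limit}'s formula for $\E[U_n]$ carries over to $\sigma=1/2$ fills a small gap the paper leaves implicit.)
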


Once again, we have some refinements on the almost sure rates of convergence.

\begin{theorem}
\label{T-general-LFQLIL-crit}
When the urn is critically small and $bc\neq0$, we have the quadratic strong law
\begin{equation}
\label{LFQsmallcrit}
 \lim_{n \rightarrow \infty} \frac{1}{\log \log n} \sum_{k=1}^n \frac{1}{(k\log k)^2}(U_k-kv_1) (U_k-kv_1)^T=bc\begin{pmatrix}1 & -1 \\ -1 & 1\end{pmatrix} \hspace{1cm} \text{a.s.}
\end{equation}
In particular,
\begin{equation}
\lim_{n \rightarrow \infty} \frac{1}{ \log \log n} \sum_{k=1}^n  \frac{\|U_k-kv_1\|^2}{(k\log k)^2}= 2 bc \hspace{1cm} \text{a.s.}
\label{LFQNORMsmallcrit}
\end{equation}
Moreover, we have the law of iterated logarithm
\begin{equation}
 \limsup_{n \rightarrow \infty} \frac{\|U_n-nv_1\|^2}{2 \log n \log \log \log n} =  2 bc \hspace{1cm} \text{a.s.}
\label{LILsmallcrit}
\end{equation}
\end{theorem}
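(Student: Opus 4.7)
The plan is to apply classical almost-sure limit theorems to the martingale $(M_n)$ introduced in \eqref{defMn-gene} and then translate the results back to $U_n-nv_1$ through the explicit formula \eqref{ESP-sigma} for $\E[U_n]$. Since by \eqref{DeltaMn-1} every increment of $(M_n)$ is colinear to the eigenvector direction $(1,-1)^T$, I first reduce to the scalar martingale $N_n$ defined by $M_n=N_n(1,-1)^T$, whose predictable quadratic variation
\[
\langle N\rangle_n = m^2\sum_{k=0}^{n-1}\sigma_{k+1}^2\,\tau_k^{-1}X_k\big(1-\tau_k^{-1}X_k\big)
\]
is read off from \eqref{ESP-delta-Mn-T}. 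From Theorem~\ref{T-general-as-crit}, $\tau_n^{-1}X_n\to c/(b+c)$ a.s., so $\tau_k^{-1}X_k(1-\tau_k^{-1}X_k)\to bc/(b+c)^2$ a.s., and combined with the critical asymptotic $w_n\sim\lambda^2\log n$ from the trichotomy following \eqref{Tr-wn} a Toeplitz-type averaging gives $\langle N\rangle_n\sim\gamma\log n$ a.s., with $\gamma$ an explicit constant proportional to $m^2bc\lambda^2/(b+c)^2$.

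With this logarithmic growth and the uniform increment bound $|\Delta N_{k+1}|\le |m|\sigma_{k+1}$, the quadratic strong law for scalar martingales in its critical form (as used, for example, by Bercu~\cite{Bercu18} for the critical elephant random walk) yields
\[
\lim_{n\to\infty}\frac{1}{\log\log n}\sum_{k=1}^n \frac{\Delta\langle N\rangle_k}{\langle N\rangle_k}\,\frac{N_k^2}{\langle N\rangle_k}=1 \quad\text{a.s.}
\]
After substituting $\Delta\langle N\rangle_k\sim\gamma/k$ and $\langle N\rangle_k\sim\gamma\log k$, this becomes the cleaner statement $\lim\frac{1}{\log\log n}\sum_{k=1}^n N_k^2/\bigl(k(\log k)^2\bigr)=\gamma$ a.s. The decomposition
\[
U_k-kv_1=\sigma_k^{-1}\Bigl(N_k+\tfrac{b\alpha-c\beta}{b+c}\Bigr)(1,-1)^T+\tfrac{\tau}{S}v_1
\]
combined with the scaling $\sigma_k^{-2}\sim k/\lambda^2$ then converts this limit into \eqref{LFQsmallcrit}, and the scalar form \eqref{LFQNORMsmallcrit} follows by taking the trace.

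For the law of the iterated logarithm \eqref{LILsmallcrit}, I invoke the classical Stout LIL for scalar martingales with bounded increments, which gives
\[
\limsup_{n\to\infty}\frac{N_n^2}{2\langle N\rangle_n\log\log\langle N\rangle_n}=1\quad\text{a.s.}
\]
Plugging in $\langle N\rangle_n\sim\gamma\log n$, so that $\log\log\langle N\rangle_n\sim\log\log\log n$, and multiplying by the deterministic scaling $2\sigma_n^{-2}\sim 2n/\lambda^2$ produces \eqref{LILsmallcrit}. The chief technical obstacle throughout is the slow, logarithmic growth of the bracket, which forces one to use the critical-regime versions of the QSL and LIL rather than the polynomial-growth versions used in the proof of Theorem~\ref{T-general-LFQLIL-small}; in particular, the remainder $\langle N\rangle_n-\gamma\log n$ has to be $o(\log n)$ almost surely, not merely $O(1)$. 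This in turn requires a quantitative rate for $\tau_n^{-1}X_n\to c/(b+c)$ beyond Theorem~\ref{T-general-as-crit}, essentially summability of the deviation against the weights $\sigma_k^2\sim\lambda^2/k$, which should be extracted from the $\IL^2$ control on $M_n$ furnished by its quadratic variation.
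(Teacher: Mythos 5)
Your argument is essentially the paper's: both reduce to the martingale $(M_n)$ (your scalar $N_n$ is the same thing written against the direction $(1,-1)^T$, exactly what the paper does with $M_n(u)=\langle u,M_n\rangle$), establish $\langle M\rangle_n/w_n\to\Gamma$ via Toeplitz, and then invoke the martingale QSL (Theorem 3 of \cite{Bercu04}) and Stout's LIL before translating back through \eqref{ESP-sigma}. One small clarification: the ``chief technical obstacle'' you flag at the end is not actually an obstacle, since Toeplitz's lemma already delivers $\langle N\rangle_n=\gamma\log n\,(1+o(1))$ a.s.\ directly from the pointwise a.s.\ convergence $\tau_n^{-1}X_n\to c/(b+c)$ together with $w_n\to\infty$, so no extra quantitative rate beyond Theorem~\ref{T-general-as-crit} is required.
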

\begin{remark}
	The law of iterated logarithm for $(X_n)$ 
	was also established by Bai, Hu and Zhang via a strong approximation argument, see Corollary 2.2 in \cite{bai02}.
\end{remark}
\begin{theorem}
\label{T-general-dis-crit}
	When the urn is critically small and $bc\neq0$, we have the following asymptotic normality
	\begin{equation}
		\frac{U_n - n v_1}{\sqrt{n\log n}} \overset{\cL}{\underset{n\to\infty}{\longrightarrow}} 
	\cN \big(0,\Gamma\big)
	\end{equation}
	where $\displaystyle \Gamma =bc\begin{pmatrix}1 & -1 \\ -1 & 1\end{pmatrix}$.
\end{theorem}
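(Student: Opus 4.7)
{Theorem}{\ref{T-general-dis-crit}}
The plan is to apply the central limit theorem for vector-valued, square-integrable, locally bounded martingales (see \cite{Hall80} or \cite{Duflo97}) to the martingale $(M_n)$ defined in \eqref{defMn-gene}, and then to transfer the result to $U_n - nv_1$ via Proposition \ref{P-ESP-limit} and Slutsky's lemma. Writing $J = \begin{pmatrix} 1 & -1 \\ -1 & 1 \end{pmatrix}$ and $\lambda = \Gamma(\frac{\tau}{S}+\frac{1}{2})/\Gamma(\frac{\tau}{S})$, combining \eqref{defMn-gene} and \eqref{ESP-sigma} gives the decomposition
\begin{equation*}
U_n - nv_1 = \sigma_n^{-1} M_n + \sigma_n^{-1}\frac{b\alpha - c\beta}{S}v_2 + \frac{\tau}{S}v_1,
\end{equation*}
and \eqref{LIMIT-sigma-n} with $\sigma = 1/2$ yields $\sigma_n^{-1}/\sqrt{n} \to 1/\lambda$. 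Hence, after dividing by $\sqrt{n\log n}$, the two non-martingale terms are respectively of order $1/\sqrt{\log n}$ and $1/\sqrt{n\log n}$, both of which vanish; so the statement reduces to proving that $M_n/\sqrt{\log n}$ converges in distribution to $\cN(0,\lambda^2 bc\,J)$.

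The bulk of the work lies in checking the two hypotheses of the martingale CLT. For the predictable quadratic variation, I would start from \eqref{quadMn} and use Theorem \ref{T-general-as-crit} to guarantee $X_k/\tau_k \to c/(b+c)$ almost surely, so that $\tau_k^{-1} X_k(1 - \tau_k^{-1} X_k) \to bc/(b+c)^2$ a.s. Together with $\sigma_{k+1}^2 \sim \lambda^2/(k+1)$ from \eqref{LIMIT-sigma-n}, a Toeplitz (Cesàro) argument against the divergent series $\sum 1/k$ delivers
\begin{equation*}
\frac{1}{\log n}\sum_{k=0}^{n-1}\sigma_{k+1}^2\tau_k^{-1}X_k\bigl(1-\tau_k^{-1}X_k\bigr) \underset{n\to\infty}{\longrightarrow} \lambda^2\frac{bc}{(b+c)^2} \quad \text{a.s.}
\end{equation*}
The critical identity $\sigma = 1/2$ forces $m = b+c$ (from $2m = S = a+b$ and $m = a-c$ one gets $a = b + 2c$, hence $m = b + c$), so the prefactor $m^2$ in \eqref{quadMn} cancels the $(b+c)^2$ in the denominator, producing $\langle M\rangle_n/\log n \to \lambda^2 bc\,J$ almost surely. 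The Lindeberg condition is free: \eqref{DeltaMn-1} yields the deterministic bound $\|\Delta M_{k+1}\| \leq \sqrt{2}\,m\,\sigma_{k+1} = O(k^{-1/2})$, so $\max_{k\leq n}\|\Delta M_k\|/\sqrt{\log n} \to 0$ and the truncated conditional second moments vanish for every $\eps > 0$.

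With both hypotheses verified, the martingale CLT gives $M_n/\sqrt{\log n} \to \cN(0,\lambda^2 bc\,J)$, which combined with the preceding Slutsky reduction yields the announced asymptotic normality with covariance $bc\,J = \Gamma$. The main subtlety is essentially bookkeeping: the Toeplitz step and the critical identification $m = b+c$. The scheme mirrors the small-urn case of Theorem \ref{T-general-dis-small}, the only change being that the normalisation $n^{1-2\sigma}$ is replaced by $\log n$ and $\sigma_n^{-1}$ now grows exactly like $\sqrt{n}$ rather than $n^{1-\sigma}$.
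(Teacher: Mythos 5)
Your proposal is correct and follows essentially the same route as the paper: apply the multivariate martingale CLT to $(M_n)$ by computing $\langle M\rangle_n$'s growth via the almost sure convergence of $\tau_k^{-1}X_k$, Toeplitz, and the critical identity $m=b+c$, verify Lindeberg, and transfer to $U_n-nv_1$ through Proposition~\ref{P-ESP-limit} and the asymptotics of $\sigma_n$. The only cosmetic differences are that the paper normalizes by $w_n$ rather than directly by $\log n$ (the two are equivalent since $w_n\sim\lambda^2\log n$), and that the paper checks Lindeberg via $w_n^{-2}\sum_k\sigma_k^4\to0$ whereas you observe that the increments are uniformly bounded so the indicators eventually vanish; both are valid.
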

\begin{remark}
	An invariance principle for $(X_n)$ was also proven by Gouet, see Proposition 2.1 in \cite{gouet93}.
\end{remark}
\subsection{Large urns}

The convergences of $n^{-\sigma}(U_n-nv_1)$ to $Wv_2$ first appeared in Pouyanne \cite{Pouyanne08}, Theorem 3.5. The almost sure convergence of $(U_n)$ for large urns is again due to Janson, Theorem 3.16 in \cite{Janson04}. The explicit calculation of the moments of $W$ are new.
\begin{theorem}
\label{T-general-as-large}
	When the urn is large, $\sigma>1/2$, we have the following convergence
	\begin{equation}
	\label{EQ-as-large}
		\lim_{n\to\infty}\frac{U_n}{n} = v_1
	\end{equation}
	almost surely and in any $\IL^p$, $p\geq 1$. Moreover, we also have
	\begin{equation}
	\label{CVGLW}
		\lim_{n\to\infty}\frac{U_n-n v_1}{n^\sigma} = W v_2
	\end{equation}
	almost surely and in $\IL^2$, where $W$ is a real-valued random variable and
	\begin{equation}
		\E[W]= \frac{\Gamma(\frac{\tau}{S})}{\Gamma(\frac{\tau}{S}+\sigma)}\frac{b\alpha - c\beta}{S},
	\end{equation}
	\begin{equation}
		\E[W^2] = \sigma^2\frac{\Gamma(\frac{\tau}{S})}{\Gamma(\frac{\tau}{S}+2\sigma)}
			\Big(\frac{bc}{2\sigma-1}\frac{\tau}{S} 
			+
			(b-c)\frac{b\alpha  - c\beta}{\sigma S}
			+ \frac{(b\alpha  - c\beta)^2}{\sigma^2S^2} \Big).
	\end{equation}
\end{theorem}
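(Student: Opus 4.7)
The plan is to rely on the martingale $(M_n)$ introduced in~\eqref{defMn-gene}. In the large regime $\sigma>1/2$, the trichotomy following~\eqref{Tr-wn} gives convergence of $w_n=\sum_{k=1}^n\sigma_k^2$, so $\text{Tr}\langle M\rangle_n$ is uniformly bounded. Therefore $(M_n)$ is $\IL^2$-bounded and Doob's theorem yields almost sure and $\IL^2$ convergence of $(M_n)$ to a limit $M_\infty$. Since $U_n-\E[U_n]$ is collinear to $v_2$ at every step, the same is true of $M_\infty$, and we may write $M_\infty=Zv_2$ for some square integrable real random variable $Z$.

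Combining this with Proposition~\ref{P-ESP-limit} gives
\begin{equation*}
\sigma_n\bigl(U_n-nv_1\bigr)=M_n+\frac{b\alpha-c\beta}{S}\,v_2+\sigma_n\,\frac{\tau}{S}\,v_1,
\end{equation*}
and dividing by $n^\sigma\sigma_n\to\lambda$ from~\eqref{LIMIT-sigma-n} produces~\eqref{CVGLW} with $W=\lambda^{-1}\bigl(Z+(b\alpha-c\beta)/S\bigr)$, both almost surely and in $\IL^2$. The convergence $U_n/n\to v_1$ then follows because $n^{\sigma-1}\to 0$, and the $\IL^p$ strengthening is a consequence of the uniform bound $\|U_n\|\leq\tau_n$. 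Since $M_0=0$ and $(M_n)$ is a martingale, $\E[Z]=0$, hence $\E[W]=(b\alpha-c\beta)/(S\lambda)$, which is the announced value.

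To compute $\E[W^2]=\lambda^{-2}\bigl(\E[Z^2]+(b\alpha-c\beta)^2/S^2\bigr)$, I would derive by conditional variance decomposition the recursion
\begin{equation*}
V_{n+1}=\bigl(1+2m\tau_n^{-1}\bigr)V_n+m^2\,\frac{\E[X_n]}{\tau_n}\Big(1-\frac{\E[X_n]}{\tau_n}\Big)
\end{equation*}
for $V_n=\text{Var}(X_n)$. Introducing $\rho_n=\prod_{k=0}^{n-1}(1+2m\tau_k^{-1})^{-1}$, the analogue of $\sigma_n$ with $\sigma$ replaced by $2\sigma$, the recursion integrates into $\rho_n V_n=m^2\sum_{k=0}^{n-1}\rho_{k+1}(\E[X_k]/\tau_k)(1-\E[X_k]/\tau_k)$. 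The $\IL^2$ convergence of the first coordinate of $M_n$ gives $\sigma_n^2V_n\to(S/(b+c))^2\E[Z^2]$, and since $\sigma_n^2/\rho_n\to\lambda^2\Gamma(\tau/S)/\Gamma(\tau/S+2\sigma)$, this expresses $\E[Z^2]$ as an explicit infinite series.

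Substituting the formula for $\E[X_k]$ from Proposition~\ref{P-ESP-limit} and expanding $(\E[X_k]/\tau_k)(1-\E[X_k]/\tau_k)$ produces three series that can each be collapsed by a telescoping identity:
\begin{equation*}
\tau_k\rho_k-\tau_{k+1}\rho_{k+1}=(2m-S)\rho_{k+1},\qquad \sigma_k^{-1}\rho_k-\sigma_{k+1}^{-1}\rho_{k+1}=m\rho_{k+1}\sigma_k^{-1}/\tau_k,
\end{equation*}
\begin{equation*}
\sigma_{k+1}^{-2}\rho_{k+1}-\sigma_k^{-2}\rho_k=m^2\rho_{k+1}\sigma_k^{-2}/\tau_k^2,
\end{equation*}
whose boundary terms are controlled respectively by $\tau_n\rho_n\to 0$ (which requires $\sigma>1/2$), by $\sigma_n^{-1}\rho_n\to 0$, and by the non-trivial limit $\sigma_n^{-2}\rho_n\to\Gamma(\tau/S+\sigma)^2/(\Gamma(\tau/S)\Gamma(\tau/S+2\sigma))$. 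Recombining the three closed-form evaluations is then expected to match exactly the three terms of the announced expression for $\E[W^2]$. The main technical effort lies in this last gamma-function bookkeeping; everything preceding it is a direct consequence of the $\IL^2$ boundedness of $(M_n)$ together with Proposition~\ref{P-ESP-limit}.
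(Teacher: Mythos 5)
Your argument is essentially the paper's: the convergence of $w_n$ for $\sigma>1/2$ makes $(M_n)$ an $\IL^2$-bounded martingale, identification of the limit with $Wv_2$ and $\E[W]$ follows from Proposition~\ref{P-ESP-limit} and $\E[M_n]=0$, and $\E[W^2]$ is obtained from the variance recursion for $X_n$ integrated against $\rho_n=\prod_{k=0}^{n-1}(1+2m\tau_k^{-1})^{-1}$ and then telescoped through the same three gamma-ratio sums that the paper evaluates via Lemma~B.1 of~\cite{Bercu18}. One small slip in the last step: you correctly record $\sigma_n^2/\rho_n\to\lambda^2\Gamma(\tau/S)/\Gamma(\tau/S+2\sigma)$, but then state the limit of $\sigma_n^{-2}\rho_n$ as $\Gamma(\tau/S+\sigma)^2/\bigl(\Gamma(\tau/S)\Gamma(\tau/S+2\sigma)\bigr)$, which is the reciprocal of what it should be; using $\lambda=\Gamma(\tau/S+\sigma)/\Gamma(\tau/S)$, consistency with your first limit forces $\sigma_n^{-2}\rho_n\to\Gamma(\tau/S)\Gamma(\tau/S+2\sigma)/\Gamma(\tau/S+\sigma)^2$, which matches the boundary term in the paper's $C_n$. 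With that corrected, the three telescoped sums recombine to the announced formula.
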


\section{Proofs of the almost sure convergence results}

\subsection{Generalized urn model -- small urns}

\begin{proof}{Theorem}{\ref{T-general-as-small}}
	We denote the maximum eigenvalue of $\langle M\rangle_n$ by $\lambda_{max} \langle M\rangle_n$. We make use of the strong law of large numbers 
	for martingales given e.g. by Theorem 4.3.15 of \cite{Duflo97}, that is for any $\gamma >0$,
	\begin{equation*}
		\frac{\|M_n\|^2}{\lambda_{max} \langle M\rangle_n}  = o \big((\log \text{Tr} \langle M\rangle_n)^{1+\gamma}\big) \hspace{1cm}\text{a.s.}
	\end{equation*}
	 It follows from \eqref{Tr-wn} that
	\begin{equation*}
		\|M_n\|^2  = o \big( w_n (\log w_n)^{1+\gamma}\big) \hspace{1cm}\text{a.s.}
	\end{equation*}
	which implies
	\begin{equation*}
		\|M_n\|^2  = o \big( n^{1-2\sigma} (\log n)^{1+\gamma}\big) \hspace{1cm}\text{a.s.}
	\end{equation*}
	Hence, we deduce from \eqref{LIMIT-sigma-n} and \eqref{defMn-gene} that
	\begin{equation*}
		\|U_n-\E[U_n]\|^2 = o \big( n (\log n)^{1+\gamma}\big) \hspace{1cm}\text{a.s.}
	\end{equation*}
	which completes the proof for the almost sure convergence. The convergence in any $\IL^p$ for $p\geq 1$ holds since $n^{-1}\|U_n-\E[U_n]\|$ is uniformly bounded by $2\sqrt{2}(\tau +S)$.
\end{proof}

\begin{proof}{Theorem}{\ref{T-general-LFQLIL-small}}
	We shall make use of Theorem 3 of \cite{Bercu04}. For any $u\in\R^2$ let $M_n(u)= \langle u , M_n\rangle$ and denote
	$\displaystyle{f_n=\frac{\sigma_n^2}{w_n}}$. 
	We have from \eqref{LIMIT-sigma-n} that $f_n$ is equivalent to $(1-2\sigma)n^{-1}$ and converges to 0. Moreover, we obtain from equations \eqref{quadMn}, \eqref{general-as-small} and Toeplitz lemma that
		\begin{eqnarray*}
	\lim_{n\to\infty} \frac{1}{w_n}\langle M\rangle_n & = & 
		\lim_{n\to\infty} \frac{m^2}{w_n}\begin{pmatrix}1 & -1 \\ -1 & 1\end{pmatrix}\sum_{k=0}^{n-1}\sigma_{k+1}^{2}\tau_k^{-1} X_k \big(1-\tau_k^{-1} X_k\big) \\ 
		& = & \frac{bcm^2}{(b+c)^2}\begin{pmatrix}1 & -1 \\ -1 & 1\end{pmatrix}\hspace{1cm}\text{a.s.}
		\end{eqnarray*}
		which implies that
		\begin{equation}
		\label{Mnwnrates}
			\lim_{n\to\infty} \frac{1}{w_n}\langle M\rangle_n = (1-2\sigma)\Gamma \hspace{1cm}\text{a.s.}
		\end{equation}
	Therefore, we get from \eqref{Mnwnrates} that 
	\begin{equation*}
	\lim_{n\to\infty} \frac{1}{\log w_n} \sum_{k=1}^n f_k \Big(\frac{M_k(u)^2}{w_k}\Big) = (1-2\sigma)u^T \Gamma u\hspace{1cm}\text{a.s.}
	\end{equation*}
	which leads to
	\begin{equation*}
	\lim_{n\to\infty} \frac{1}{\log n} \sum_{k=1}^n f_k^2  u^T (U_k-E[U_k]) (U_n-E[U_k])^T u = (1-2\sigma)^2 u^T \Gamma u\hspace{1cm}\text{a.s.}
	\end{equation*}
	Furthermore, we have from \eqref{ESP-sigma} that $\E[U_n]$ is equivalent to $nv_1$. Consequently, we obtain that
	\begin{equation*}
	\lim_{n\to\infty} \frac{1}{\log n} \sum_{k=1}^n \frac{1}{k^2} (U_k-kv_1) (U_k-kv_1)^T =  \Gamma \hspace{1cm}\text{a.s.}
	\end{equation*}
We now focus our attention on the law of iterated logarithm. We already saw that
\begin{equation*}
 	\sum_{n=1}^{\infty} \frac{\sigma_n^4}{w_n^2}< \infty.
 \end{equation*} Hence, it follows from the law of iterated logarithm for real martingales that first appeared in Stout \cite{Stout70,Stout74}, that for any $u\in\R^d$,
\begin{eqnarray*}
\underset{n \to \infty}{\lim \sup} \frac{1}{\sqrt{2w_n\log\log w_n}}M_n(u) & = & - \underset{n \to \infty}{\lim \inf} \frac{1}{\sqrt{2w_n\log\log w_n}}M_n(u)  \\
								& = & \sqrt{(1-2\sigma) u^T \Gamma u }\hspace{1cm} \text{a.s.}
\end{eqnarray*}
Consequently, as $M_n(u) =\sigma_n\langle u, U_n - \E[U_n] \rangle$, we obtain that
\begin{eqnarray*}
\underset{n \to \infty}{\lim \sup} \frac{1}{\sqrt{2 n\log\log n}}\langle u, U_n - \E[U_n]  \rangle & = & - \underset{n \to \infty}{\lim \inf} \frac{1}{\sqrt{2n\log\log n}}\langle u, U_n - \E[U_n]  \rangle  \\
								& = & \sqrt{ u^T \Gamma u }\hspace{1cm} \text{a.s.}
\end{eqnarray*}
In particular, for any vector $u\in\R^2$ 
\begin{equation*}
\underset{n \to \infty}{\lim \sup} \frac{1}{2 n\log\log n} u^T (U_n - \E[U_n])(U_n - \E[U_n]) u = u^T \Gamma u \hspace{1cm} \text{a.s.} 
\end{equation*}
Finally, we deduce once again from \eqref{ESP-sigma}
\begin{equation*}
\underset{n \to \infty}{\lim \sup} \frac{1}{2 n\log\log n} (U_n - nv_1 ) (U_n - nv_1)^T= \Gamma \hspace{1cm} \text{a.s.} 
\end{equation*}
which completes the proof of Theorem \ref{T-general-LFQLIL-small}.
\end{proof}

\subsection{Generalized urn model -- critically small urns}

\begin{proof}{Theorem}{\ref{T-general-as-crit}}
	Again, we make use of the strong law of large numbers for martingales given e.g. by Theorem 4.3.15 of \cite{Duflo97}, that is for any $\gamma >0$,
	$$\frac{\|M_n\|^2}{\lambda_{max} \langle M\rangle_n}  = o \big((\log \text{Tr} \langle M\rangle_n)^{1+\gamma}\big) \hspace{1cm}\text{a.s.} $$
	 Since $\text{Tr} \langle M\rangle_n \leq m^2 w_n$ and the quadratic version of $M_n$ is a semi-definite positive matrix we have $\lambda_{max} \langle M\rangle_n \leq m^2 w_n$ so that
	$$\|M_n\|^2  = o \big( w_n (\log w_n)^{1+\gamma}\big) \hspace{1cm}\text{a.s.}$$
	which implies
	$$\|M_n\|^2  = o \big( \log n (\log \log n)^{1+\gamma}\big) \hspace{1cm}\text{a.s.}$$
	Moreover, by definition of $M_n$ and using $\sigma_n$ equivalent we get
	$$\|U_n-\E[U_n]\|^2 = o \big( \sqrt{n} \log n (\log \log n)^{1+\gamma}\big) \hspace{1cm}\text{a.s.} $$
	which completes the proof for the almost sure convergence. The convergence in any $\IL^p$ for $p\geq 1$ holds by the same arguments as in the proof of Theorem \ref{T-general-as-small}.
	\end{proof}

\begin{proof}{Theorem}{\ref{T-general-LFQLIL-crit}}
	We shall once again make use of Theorem 3 of \cite{Bercu04}. For any $u\in\R^2$ let $M_n(u)= \langle u , M_n\rangle$ and denote
	$\displaystyle{f_n=\frac{\sigma_n^2}{w_n}}$. 
	We have from \eqref{LIMIT-sigma-n} that $f_n$ is equivalent to $(n\log n){-1}$ and converges to 0. When $\sigma=1/2$ we have $b+c=m$. Moreover, we obtain from equations \eqref{quadMn}, \eqref{general-as-crit} and Toeplitz lemma that
		\begin{eqnarray*}
	\lim_{n\to\infty} \frac{1}{w_n}\langle M\rangle_n & = & 
		\lim_{n\to\infty} \frac{m^2}{w_n}\begin{pmatrix}1 & -1 \\ -1 & 1\end{pmatrix}\sum_{k=0}^{n-1}\sigma_{k+1}^{2}\tau_k^{-1} X_k \big(1-\tau_k^{-1} X_k\big) \\ 
		& = & {bc}\begin{pmatrix}1 & -1 \\ -1 & 1\end{pmatrix}\hspace{1cm}\text{a.s.}
		\end{eqnarray*}
		which implies that
		\begin{equation}
		\label{Mnwnrates-crit}
			\lim_{n\to\infty} \frac{1}{w_n}\langle M\rangle_n = \Gamma \hspace{1cm}\text{a.s.}
		\end{equation}
	Therefore, we get from \eqref{Mnwnrates} that 
	\begin{equation*}
	\lim_{n\to\infty} \frac{1}{\log w_n} \sum_{k=1}^n f_k \Big(\frac{M_k(u)^2}{w_k}\Big) = u^T \Gamma u\hspace{1cm}\text{a.s.}
	\end{equation*}
	which leads to
	\begin{equation*}
	\lim_{n\to\infty} \frac{1}{\log \log n} \sum_{k=1}^n f_k^2  u^T (U_k-E[U_k]) (U_n-E[U_k])^T u = u^T \Gamma u\hspace{1cm}\text{a.s.}
	\end{equation*}
	Consequently, we obtain from \eqref{ESP-sigma} that
	\begin{equation*}
	\lim_{n\to\infty} \frac{1}{\log \log n} \sum_{k=1}^n \frac{1}{(k\log k)^2} (U_k-kv_1) (U_k-kv_1)^T =  \Gamma \hspace{1cm}\text{a.s.}
	\end{equation*}
We now focus our attention on the law of iterated logarithm. It is not hard to see that
\begin{equation*}
 	\sum_{n=1}^{\infty} \frac{\sigma_n^4}{w_n^2}< \infty.
 \end{equation*} Hence, it follows from the law of iterated logarithm for real martingales that first appeared in Stout \cite{Stout70,Stout74}, that for any $u\in\R^d$,
\begin{eqnarray*}
\underset{n \to \infty}{\lim \sup} \frac{1}{\sqrt{2w_n\log\log w_n}}M_n(u) & = & - \underset{n \to \infty}{\lim \inf} \frac{1}{\sqrt{2w_n\log\log w_n}}M_n(u)  \\
								& = & \sqrt{u^T \Gamma u }\hspace{1cm} \text{a.s.}
\end{eqnarray*}
Consequently, we obtain that
\begin{eqnarray*}
\underset{n \to \infty}{\lim \sup} \frac{1}{\sqrt{2 \log n\log\log\log n}}\langle u, U_n - \E[U_n]  \rangle & = & - \underset{n \to \infty}{\lim \inf} \frac{1}{\sqrt{2\log n\log\log\log n}}\langle u, U_n - \E[U_n]  \rangle  \\
								& = & \sqrt{ u^T \Gamma u }\hspace{1cm} \text{a.s.}
\end{eqnarray*}
In particular, for any vector $u\in\R^2$ 
\begin{equation*}
\underset{n \to \infty}{\lim \sup} \frac{1}{2\log n\log\log\log n} u^T (U_n - \E[U_n])(U_n - \E[U_n]) u = u^T \Gamma u \hspace{1cm} \text{a.s.} 
\end{equation*}
Finally, we deduce once again from \eqref{ESP-sigma} that
\begin{equation*}
\underset{n \to \infty}{\lim \sup} \frac{1}{2\log n\log\log\log n} (U_n - nv_1 ) (U_n - nv_1)^T= \Gamma \hspace{1cm} \text{a.s.} 
\end{equation*}
which completes the proof of Theorem \ref{T-general-LFQLIL-crit}.
\end{proof}
 
\subsection{Generalized urn model -- large urns}

\begin{proof}{Theorem}{\ref{T-general-as-large}}
	First, as $\text{Tr}\langle M\rangle_n \leq m^2 w_n < \infty$, we have that $(M_n)$ converges almost surely to a random vector $Mv_2$, where $M$ is a real-valued random variable and 
	\begin{equation*}
	 	\lim_{n\to\infty} \sigma_n  \big( X_n -\E[X_n] \big) =\frac{S}{b+c} M = \frac{1}{1-\sigma} M  \hspace{1cm}\text{a.s.}
	 \end{equation*} 
	Hence, it follows from \eqref{defMn-gene} that
	\begin{equation}
	\label{CV-Mv2}
		\lim_{n\to\infty} \sigma_n(U_n - \E[U_n])= Mv_2 \hspace{1cm}\text{a.s.}
	\end{equation}
	which implies via \eqref{LIMIT-sigma-n} that
	\begin{equation*}
		\lim_{n\to\infty} \sigma_n(U_n - \E[U_n])=\lim_{n\to\infty} \frac{\lambda }{n^\sigma}\| U_n - \E[U_n]\| = \|Mv_2\|\hspace{1cm}\text{a.s.}
	\end{equation*}
	Therefore, we obtain that 
	\begin{equation}
	\label{Un-large-cv}
		\lim_{n\to\infty} \frac{\| U_n - \E[U_n]\|}{n} = 0 \hspace{1cm}\text{a.s.}
	\end{equation}
	Hence, we deduce \eqref{EQ-as-large} from \eqref{CV-Mv2} and \eqref{Un-large-cv}.
	The convergence in any $\IL^p$ for $p\geq 1$ holds again by the same arguments as before.
	We now focus our attention on equation \eqref{CVGLW}. We have from \eqref{ESP-sigma} and \eqref{CV-Mv2} that
	\begin{equation*}
		\lim_{n\to \infty } \sigma_n \big( U_n -\E[U_n] \big)  
		= \lim_{n\to \infty }\sigma_n \big(U_n -n v_1\big) - \Big(\frac{b\alpha - c\beta}{S}\Big)v_2 = Mv_2 \hspace{1cm}\text{a.s.} 
	\end{equation*}
	Consequently,
	\begin{equation*}
	\label{lim-UnW}
	\lim_{n\to \infty }\frac{U_n -n v_1}{n^\sigma}  = Wv_2 \hspace{1cm}\text{a.s.}
	\end{equation*} 
	where the random variable W is given by
	\begin{equation}
	\label{defW}	 
	W = \frac{1}{\lambda}\big(M + \frac{b\alpha - c\beta}{S}\big)						
	\end{equation}
	Hereafter, as 
	\begin{equation*}
		\E\big[\|M_n\|^2\big] = \E\big[\text{Tr}\langle M\rangle_n] \leq m^2 w_n,
	\end{equation*}
	we get that
	\begin{equation*}
		\sup_{n\geq 1} \E\big[\|M_n\|^2\big] < \infty
	\end{equation*}
	which means that $(M_n)$ is a martingale bounded in $\IL^2$, thus converging in $\IL^2$.
	Finally, as $\E[M_n]=0$ and $(M_n)$ converges in $\IL^1$ to $M$, $\E[M]=0$. Hence, we find from \eqref{lim-UnW} that
	\begin{equation*}
		\E[W] = \frac{\Gamma(\frac{\tau}{S})}{\Gamma(\frac{\tau}{S}+\sigma)}\frac{b\alpha - c\beta}{S}.
	\end{equation*}
	We shall now proceed to the computation of $\E[W^2]$. We have from \eqref{defW} that
	\begin{equation}
	\label{ESP-MW-2}
	 	\E[M^2] = {\lambda^2}\E[W^2] - \frac{(b\alpha - c\beta)^2}{S^2},
	 \end{equation} so that we only need to find $\E[M^2]$. It is not hard to see that 
	\begin{equation*}
		\E\big[(X_{n+1}-\E[X_{n+1}])^2\big] = (1+2m\tau_n^{-1})\E\big[(X_{n}-\E[X_{n}])^2\big] + m^2 \tau_n^{-1}\E[X_n]\big(1-\tau_n^{-1}\E[X_n])
	\end{equation*}
	wich leads to
	\begin{eqnarray*}
	\label{ESP-2-Xn}
		\E\big[X_n-\E[X_n]\big]^2
		& = & m^2\frac{\Gamma(n+\frac{\tau}{S}+2\sigma)}{\Gamma(n+\frac{\tau}{S})}
		\sum_{k=0}^{n-1} 
		\frac{\Gamma(k+1+\frac{\tau}{S})}{\Gamma(k+1+\frac{\tau}{S}+2\sigma)} 
		\tau_k^{-1}\E[X_k]\big(1-\tau_k^{-1}\E[X_k]) \\
		& = & \frac{\sigma^2}{(1-\sigma)^2}\frac{\Gamma(n+\frac{\tau}{S}+2\sigma)}{\Gamma(n+\frac{\tau}{S})} S_n.
	\end{eqnarray*}
	It follows from \eqref{ESP-sigma} that
	\begin{eqnarray*}
		S_n & = & {(b+c)^2}\sum_{k=0}^{n-1} \tau_k^{-1}\E[X_k]\big(1-\tau_k^{-1}\E[X_k])
		\frac{\Gamma(k+1+\frac{\tau}{S})}{\Gamma(k+1+\frac{\tau}{S}+2\sigma)} \\
			& = & 
			bc A_n +(b-c)\frac{b\alpha  - c\beta}{S}\frac{\Gamma(\frac{\tau}{S})}{\Gamma(\frac{\tau}{S}+\sigma)}B_n - \frac{(b\alpha  - c\beta)^2}{S^2} \frac{\Gamma(\frac{\tau}{S})^2}{\Gamma(\frac{\tau}{S}+\sigma)^2} C_n
	\end{eqnarray*}
	where $A_n$, $B_n$ and $C_n$ are as follows, and we obtain from lemma B.1 in \cite{Bercu18} that 
	\begin{equation*}
		A_n = \sum_{k=1}^{n}\frac{\Gamma(k+\frac{\tau}{S})}{\Gamma(k+\frac{\tau}{S}+2\sigma)} =\frac{1}{2\sigma-1}
		 	\big(\frac{\Gamma(\frac{\tau}{S}+1)}{\Gamma(\frac{\tau}{S}+2\sigma)} - \frac{\Gamma(n+\frac{\tau}{S}+1)}{\Gamma(n+\frac{\tau}{S}+2\sigma)}\big),
	\end{equation*}
	\begin{equation*}
		B_n =  \sum_{k=1}^{n}\frac{\Gamma(k-1+\frac{\tau}{S}+\sigma)}{\Gamma(k+\frac{\tau}{S}+2\sigma)}
		  = \frac{1}{\sigma}
		 	\big(\frac{\Gamma(\frac{\tau}{S}+\sigma)}{\Gamma(\frac{\tau}{S}+2\sigma)} - \frac{\Gamma(n+\frac{\tau}{S}+\sigma)}{\Gamma(n+\frac{\tau}{S}+2\sigma)}\big),
	\end{equation*}	
	\begin{equation*}
		C_n = \sum_{k=1}^{n}\frac{\Gamma(k-1+\frac{\tau}{S}+\sigma)^2}{\Gamma(k+\frac{\tau}{S})\Gamma(k+\frac{\tau}{S}+2\sigma)}
				= \frac{1}{\sigma^2}
		 	\big(\frac{\Gamma(n+\frac{\tau}{S}+\sigma)^2}{\Gamma(n+\frac{\tau}{S})\Gamma(n+\frac{\tau}{S}+2\sigma)}-\frac{\Gamma(\frac{\tau}{S}+\sigma)^2}{\Gamma(\frac{\tau}{S})\Gamma(\frac{\tau}{S}+2\sigma)}\big).
	\end{equation*}
	Consequently, we have
	\begin{equation}
	\label{ESP-M2}
			 \E[M^2] = \frac{\sigma^2\lambda^2\Gamma(\frac{\tau}{S})}{\Gamma(\frac{\tau}{S}+2\sigma)}
			\Big(\frac{bc}{2\sigma-1}\frac{\tau}{S} 
			+
			(b-c)\frac{b\alpha  - c\beta}{\sigma S}
			+ \frac{(b\alpha  - c\beta)^2}{\sigma^2S^2} \Big) -
			\frac{(b\alpha  - c\beta)^2}{S^2}
	\end{equation}
	and we achieve the proof of Theorem \ref{T-general-as-large} via \eqref{ESP-MW-2} and \eqref{ESP-M2}.
\end{proof}

\section{Proofs of the asymptotic normality results}

\subsection{Traditional urn model}

\begin{proof}{Proof}{\ref{T-tradi-dis}}
We shall make use of part $(b)$ of Theorem 1 and Corollaries 1 and 2 from \cite{Heyde77}.
Let \begin{equation*}
	s_n^2=\sum_{k=n}^\infty \E[\Delta M_k^2].
\end{equation*}
It is not hard to see that 
\begin{equation*}
	\lim_{n\to\infty} s_n^2 =0 
\end{equation*}
since \begin{equation*}
	\sum_{n=1}^\infty \E[\Delta M_n^2] \leq \frac{S^2}{4}\sum_{n=1}^\infty \frac{1}{\tau_n^2}<+\infty.
\end{equation*}
Moreover, using the convergence of $(M_n)$ in $\IL^2$ and the moments of a beta distribution with parameters $\frac{\alpha}{S}$ and $\frac{\beta}{S}$, we get that
\begin{equation*}
	\lim_{n\to \infty}\Big(\sum_{k=n}^\infty \frac{1}{\tau_{k+1}^2}\Big)^{-1}s_n^2= \frac{\alpha\beta S^2}{(\alpha+\beta)(\alpha+\beta+S)},
\end{equation*}
leading to
\begin{equation*}
	\lim_{n\to \infty} n s_n^2= \ell \hspace{1cm}\text{where}\hspace{1cm} \ell=\displaystyle\frac{\alpha\beta}{(\alpha+\beta)(\alpha+\beta+S)}.
\end{equation*}
Hence 
\begin{eqnarray*}
\lim_{n\to\infty} \frac{1}{s_n^2}\sum_{k=n}^\infty \E\bigl[\Delta M_{k+1}^2 | \F_{k} \bigr] & = & \lim_{n\to\infty} \frac{1}{s_n^2}\sum_{k=n}^\infty \frac{c^2M_k(1-M_k)}{\tau_{k+1}^2} \hspace{1cm} \text{a.s.}\\
		& = &   \lim_{n\to\infty}  \frac{1}{\ell S^2}\Big(\sum_{k=n}^\infty \frac{1}{\tau_{k+1}^2}\Big)^{-1} \sum_{k=n}^\infty \frac{S^2M_k(1-M_k)}{\tau_{k+1}^2} \hspace{1cm} \text{a.s.}\\
		& = & \frac{M_\infty(1-M_\infty)}{\ell} \hspace{1cm} \text{a.s.}
\end{eqnarray*}
Consequently, the first condition of part (b) of Corollary 1 in \cite{Heyde77} is satisfied with $\displaystyle\eta^2 = \ell^{-1} M_\infty(1-M_\infty)$. Let us now focus on the second condition of Corollary 1 in \cite{Heyde77} and let $\eps>0$. On the one hand, we get that for all $\eps>0$
\begin{equation*}
	\frac{1}{s_n^2}\sum_{k=n}^\infty \E\bigl[\Delta M_{k+1}^2 \1_{|\Delta M_{k+1}| > \eps s_n} \bigr] 
	 \leq  \frac{1}{\eps^2 s_n^4}\sum_{k=n}^\infty \E\bigl[\Delta M_{k+1}^4  \bigr]
	 \leq  \frac{7S^4}{\eps^2 s_n^4}\sum_{k=n}^\infty \frac{1}{\tau_k^4} 
	 \leq \frac{7}{\eps^2 s_n^4}\sum_{k=n}^\infty \frac{1}{k^4}.
\end{equation*}
On the other and, using that $s_n^4$ increases at speed $n^2$ and that
\begin{equation*}
	\lim_{n\to\infty} 3n^3\sum_{k=n}^\infty \frac{1}{k^4}=1,
\end{equation*}
we can conclude that 
\begin{equation*}
	\displaystyle \lim_{n\to\infty} \frac{1}{s_n^2}\sum_{k=n}^\infty \E\bigl[\Delta M_k^2 \1_{|\Delta M_k| > \eps s_n} \bigr] = 0 \hspace{1cm}\text{a.s.}
\end{equation*}
Hereafter, we easily get that 
\begin{equation}
\label{bracketMn}
	\sum_{k=1}^\infty \frac{1}{s_k^4}\E\big[\Delta M_k^4 | \F_{k-1}\big] \leq 7 \sum_{k=1}^\infty \frac{1}{k^2} <+\infty.
\end{equation} 
Noting that 
\begin{equation*}
	\sum_{k=1}^n\frac{1}{s_k^2}\big(|\Delta M_k|^2 - \E\big[|\Delta M_k|^2 |\F_{k-1}\big]\big)
\end{equation*} 
is a martingale, the equation \eqref{bracketMn} proves that its bracket is convergent, wich implies that the martingale is also convergent. This gives us
\begin{equation*}
	\sum_{k=1}^\infty\frac{1}{s_k^2}\big(|\Delta M_k|^2 - \E\big[|\Delta M_k|^2 |\F_{k-1}\big]\big) < + \infty \hspace{1cm}\text{a.s.}
\end{equation*}
Hence, the second condition of Corollary 1 in \cite{Heyde77} is satisfied. Therefore we obtain that 
\begin{equation}	
  \frac{ M_\infty - M_n}{\sqrt{\langle M\rangle_\infty - \langle M\rangle_n}} \underset{n\to\infty}{\overset{\cL}{\longrightarrow}} \cN\big(0,1\big).
\end{equation}
Moreover, since 
\begin{equation*}
	\lim_{n\to\infty}\sqrt{\frac{M_n(1-M_n)}{n(\langle M\rangle_\infty - \langle M\rangle_n)}} = 1\hspace{1cm}\text{a.s.}
\end{equation*}
we finally obtain from Slutky's Lemma that
\begin{equation}	
  \sqrt{n}\frac{ M_\infty - M_n}{\sqrt{M_n(1-M_n)}} \underset{n\to\infty}{\overset{\cL}{\longrightarrow}} \cN\big(0,1\big).
\end{equation}
which achieves the proof of Theorem \ref{T-tradi-dis}.
\end{proof}

\subsection{Generalized urn model -- small urns}

\begin{proof}{Theorem}{\ref{T-general-dis-small}}
We shall make use of the central limit theorem for multivariate martingales given e.g. by Corollary 2.1.10 in \cite{Duflo97}. First of all, we already saw from \eqref{Mnwnrates} that
	\begin{equation*}
	\lim_{n\to\infty} \frac{1}{w_n}\langle M\rangle_n = (1-2\sigma)\Gamma \hspace{1cm}\text{a.s.}
\end{equation*}
It only remains to show that Linderberg's condition is satisfied, that is for all $\eps >0$,
\begin{equation*}
	\frac{1}{w_n} \sum_{k=0}^{n-1} \E\big[\|\Delta M_{k+1}\|^2 \1_{\|\Delta M_{k+1}\|\geq \eps \sqrt{w_n}}| \F_k\big]
\overset{\IP}{\underset{n\to\infty}{\longrightarrow}} 0.
\end{equation*}
We clearly have
\begin{equation*}
	\frac{1}{w_n} \sum_{k=0}^{n-1} \E\big[\|\Delta M_{k+1}\|^2 \1_{\|\Delta M_{k+1}\|\geq \eps \sqrt{w_n}}| \F_k\big] 
	  \leq \frac{1}{\eps w_n^2} \sum_{k=0}^{n-1} \E\big[\|\Delta M_{k+1}\|^4\big] 
	  \leq\frac{m^2}{\eps w_n^2} \sum_{k=0}^{n-1} \sigma_k^4 \hspace{1cm}\text{a.s.}
\end{equation*}
However, it is not hard to see that
\begin{equation*}
	\lim_{n\to\infty} \frac{1}{w_n^2}\sum_{k=0}^{n-1} \sigma_k^4 = 0
\end{equation*}
which ensures Lindeberg's condition is satisfied. Consequently, we can conclude that 
\begin{equation*}
 	\frac{M_n}{\sqrt{w_n}} \overset{\cL}{\underset{n\to\infty}{\longrightarrow}} 
	\cN \big(0,(1-2\sigma)\Gamma\big). 
 \end{equation*}
As $M_n =\sigma_n\big(U_n - \E[U_n]\big)$ and $\sqrt{n}\sigma_n$ is equivalent to $\sqrt{(1-2\sigma)w_n}$, 
together with \eqref{ESP-sigma}, we obtain that 
\begin{equation*}
	\frac{U_n - n v_1}{\sqrt{n}} \overset{\cL}{\underset{n\to\infty}{\longrightarrow}} 
	\cN \big(0,\Gamma\big).
\end{equation*}
\end{proof}

\subsection{Generalized urn model -- critically small urns}

\begin{proof}{Theorem}{\ref{T-general-dis-crit}}
We shall also make use of the central limit thoerem for multivariate martingales. We already saw from \eqref{Mnwnrates-crit} that
	\begin{equation*}
	\lim_{n\to\infty} \frac{1}{w_n}\langle M\rangle_n 
		= bc\begin{pmatrix}1 & -1 \\ -1 & 1\end{pmatrix}.
\end{equation*}

Once again, it only remains to show that Linderberg's condition is satisfied, that is for all $\eps >0$,
\begin{equation*}
	\frac{1}{w_n} \sum_{k=0}^{n-1} \E\big[\|\Delta M_{k+1}\|^2 \1_{\|\Delta M_{k+1}\|\geq \eps \sqrt{w_n}}| \F_k\big]
\overset{\IP}{\underset{n\to\infty}{\longrightarrow}} 0.
\end{equation*}
As in the proof of Theorem \eqref{T-general-dis-small}, we have
\begin{equation*}
	\frac{1}{w_n} \sum_{k=0}^{n-1} \E\big[\|\Delta M_{k+1}\|^2 \1_{\|\Delta M_{k+1}\|\geq \eps \sqrt{w_n}}| \F_k\big] 
	  \leq \frac{1}{\eps w_n^2} \sum_{k=0}^{n-1} \E\big[\|\Delta M_{k+1}\|^4\big] 
	  \leq \frac{m^2}{2\eps w_n^2} \sum_{k=0}^{n-1} \sigma_k^4. \hspace{1cm}\text{a.s.}
\end{equation*}
It is not hard to see that once again
\begin{equation*}
	\lim_{n\to\infty} \frac{1}{w_n^2}\sum_{k=0}^{n-1} \sigma_k^4=0.
\end{equation*}
Hence, Lindeberg's condition is satisfied and we find that 
\begin{equation*}
 	 \frac{M_n}{\sqrt{w_n}} \overset{\cL}{\underset{n\to\infty}{\longrightarrow}} 
	\cN \big(0,\Gamma\big). 
 \end{equation*}
As $M_n =\sigma_n\big(U_n - \E[U_n]\big)$ and $\sigma_n\sqrt{n\log n}$ is equivalent to $\sqrt{w_n}$, together with \eqref{ESP-sigma}, we can conclude that 
\begin{equation*}
	\frac{U_n - n v_1}{\sqrt{n}} \overset{\cL}{\underset{n\to\infty}{\longrightarrow}} 
	\cN \big(0,\Gamma\big).
\end{equation*}
\end{proof}

\nocite{*}
\bibliographystyle{acm}
\small\bibliography{Biblio-polya.bib}

\end{document}